\numberwithin{equation}{section}
\renewenvironment{quote}
  {\list{}{\rightmargin=.5cm \leftmargin=.5cm}%
   \item\relax}
  {\endlist}
\def\@cite#1#2{{\m@th\upshape\bfseries%
[{#1\if@tempswa{\m@th\upshape\mdseries, #2}\fi}]}}
\theoremstyle{plain}
\newtheorem{theorem}{Theorem}[section]
\newtheorem{corollary}[theorem]{Corollary}
\newtheorem{proposition}[theorem]{Proposition}
\newtheorem{lemma}[theorem]{Lemma}
\theoremstyle{definition}
\newtheorem{definition}[theorem]{Definition}
\newtheorem{remark}[theorem]{Remark}
\newtheorem*{acknow}{Acknowledgements}
\theoremstyle{remark}
\renewcommand{\qedsymbol}{{\vrule height5pt width5pt depth1pt}}
  \newcommand{\A}{{\mathcal{A}}}
  \newcommand{\B}{{\mathcal{B}}}
  \newcommand{\C}{{\mathcal{C}}}
  \newcommand{\F}{{\mathcal{F}}}
  \newcommand{\K}{{\mathcal{K}}}
\renewcommand{\L}{{\mathcal{L}}}
\renewcommand{\O}{{\mathcal{O}}}
  \newcommand{\T}{{\mathcal{T}}}
\newcommand{\eps}{\varepsilon}
\def\al{\alpha}
\def\be{\beta}
\def\ga{\gamma}
\def\la{\lambda}
\def\si{\sigma}
\newcommand{\bC}{\mathbb{C}}
\newcommand{\bT}{\mathbb{T}}
\newcommand{\bZ}{\mathbb{Z}}
\newcommand{\fL}{{\mathfrak{L}}}
\newcommand{\foral}{\text{ for all }}
\newcommand{\qand}{\quad\text{and}\quad}
\newcommand{\ca}{\mathrm{C}^*}
\newcommand{\cenv}{\mathrm{C}^*_{\textup{env}}}
\newcommand{\ol}{\overline}
\newcommand{\ad}{\operatorname{ad}}
\newcommand{\alg}{\operatorname{alg}}
\newcommand{\id}{{\operatorname{id}}}
\newcommand{\spn}{\operatorname{span}}
\newcommand{\sca}[1]{\left\langle#1\right\rangle} %\sca{a,b} =<a,b>
\newcommand{\lsca}[1]{\left[#1\right]}            % \lsca{a,b} = [a,b]
\newcommand{\rsca}[1]{\left(#1\right)}            % \rsca{a,b} = (a,b)
\newcommand{\nor}[1]{\left\Vert #1\right\Vert} %\nor{x}=||x||
\newcommand{\ncl}[1]{\left[ #1 \right]^{-\|\cdot\|}} %norm closure linear span
\newcommand{\ed}{\stackrel{\textup{d}}{\thicksim}}
\newcommand{\sme}{\stackrel{\textup{SME}}{\thicksim}}
\newcommand{\dee}{\, \stackrel{\textup{$\Delta$}}{\thicksim} \,}
\begin{document}
%%%%%%%%%%%%%%%%%%%%%%%%%%%%%%%%

%%%%%%%%%%%%%%%%%%%%%%%%%%%%%%%%
\title[Morita equivalence of C*-correspondences]{Morita equivalence of C*-correspondences passes to the related operator algebras}

\author[G.K. Eleftherakis]{George K. Eleftherakis}
\address{Department of Mathematics\\Faculty of Sciences\\University of Patras\\26504 Patras\\Greece}
\email{gelefth@math.upatras.gr}

\author[E.T.A. Kakariadis]{Evgenios T.A. Kakariadis}
\address{School of Mathematics and Statistics\\ Newcastle University\\ Newcastle upon Tyne\\ NE1 7RU\\ UK}
\email{evgenios.kakariadis@ncl.ac.uk}

\author[E.G. Katsoulis]{Elias~G.~Katsoulis}
\address{Department of Mathematics\\ East Carolina University\\ Greenville, NC 27858\\USA}
\email{katsoulise@ecu.edu}

\thanks{2010 {\it  Mathematics Subject Classification.} 47L25, 46L07}

\thanks{{\it Key words and phrases:} Strong Morita equivalence, C*-correspondences, tensor algebras.}

\maketitle

%%%%%%%%%%%%%%%%%%%%%%%%%%%%%%%%
\begin{abstract}
We revisit a central result of Muhly and Solel on operator algebras of C*-correspondences.
We prove that (possibly non-injective) strongly Morita equivalent C*-correspondences have strongly Morita equivalent relative Cuntz-Pimsner C*-algebras.
The same holds for strong Morita equivalence (in the sense of Blecher, Muhly and Paulsen) and strong $\Delta$-equivalence (in the sense of Eleftherakis) for the related tensor algebras.
In particular, we obtain stable isomorphism of the operator algebras when the equivalence is given by a $\sigma$-TRO.
As an application we show that strong Morita equivalence coincides with strong $\Delta$-equivalence for tensor algebras of aperiodic C*-correspondences. 
\end{abstract}

%%%%%%%%%%%%%%%%%%%%%%%%%%%%%%%%
\section{Introduction}
%%%%%%%%%%%%%%%%%%%%%%%%%%%%%%%%

Introduced by Rieffel in the 1970's \cite{Rie74, Rie74-2}, Morita theory provides an important equivalence relation between C*-algebras.
In the past 25 years there have been fruitful extensions to cover more general (possibly nonselfadjoint) spaces of operators.
These directions cover (dual) operator algebras and (dual) operator spaces, e.g. \cite{Ble01, BleKas08, BMP00, BMN99, Ele08, Ele10, Ele12, Ele14, EleKak16, ElePau08, EPT10, Kas09}.
There are two main streams in this endeavour.
Blecher, Muhly and Paulsen \cite{BMP00} introduced a strong Morita equivalence $\sme$, along with a Morita Theorem I, where the operator algebras $\A$ and $\B$ are symmetrically decomposed by two bimodules $M$ and $N$, i.e.
\[
\A \simeq M \otimes_\B N \qand \B \simeq N \otimes_\A M.
\]
Morita Theorems II and III for $\sme$ were provided by Blecher \cite{Ble01}. 
On the other hand Eleftherakis \cite{Ele08} introduced a strong $\Delta$-equivalence $\dee$ that is given by a generalized similarity under a TRO $M$, i.e.
\[
\A \simeq M \otimes \B \otimes M^* \qand \B \simeq M^* \otimes \A \otimes M.
\]
Although they coincide in the case of C*-algebras, relation $\dee$ is strictly stronger than relation $\sme$.
Indeed $\sme$ does not satisfy a Morita Theorem IV, even when $X$ and $Y$ are unital \cite[Example 8.2]{BMP00}.
However a Morita Theorem IV holds for $\dee$ on $\si$-unital operator algebras \cite[Theorem 3.2]{Ele14}.

The Morita context can be modified to cover other classes as well.
In their seminal paper, Muhly and Solel \cite{MuhSol00} introduced a strong Morita equivalence for C*-correspondences and formulate the following programme for the tensor algebras of C*-correspondences:
\begin{quote}
\textbf{Rigidity of SME.} Let $\T_E^+$ and $\T_F^+$ be the tensor algebras of the C*~-~correspondences $E$ and $F$.
When is it true that $E \sme F$ is equivalent to $\T_E^+ \sme \T_F^+$?
\end{quote}
The origins of this programme can be traced in the work of Arveson \cite{Arv67} on classifying dynamics by nonselfadjoint operator algebras.
In this category $\sme$ is seen as a generalized similarity, rather than a decomposition\footnote{\ 
Decompositions in this category generalize the shift equivalences for matrices.
This stream of research follows a completely different path exploited by Muhly, Pask and Tomforde \cite{MPT08}, and Kakariadis and Katsoulis \cite{KakKat14}.}.
Muhly and Solel \cite{MuhSol00} provide an affirmative answer for injective and aperiodic C*-correspondences.
The main tool for the forward direction is to establish $\sme$ for the Toeplitz-Pimsner C*-algebras $\T_E$ and $\T_F$.
An elegant construction of matrix representations for $\T_E$ and $\T_F$ is used in \cite{MuhSol00} to accomplish this.
Aperiodicity is used for the converse to ensure that an induced Morita context is implemented fiber-wise.

Our first motivation for the current paper was to remove the injectivity assumption for the forward direction of SME-rigidity.
In fact we accomplish more by directly showing an equivalence implemented by the same TRO  between representations.
As a consequence $\sme$ on C*-correspondences implies equivalence of all related operator algebras, i.e. their tensor algebras and their $J$-relative Cuntz-Pimsner algebras, for $J$ inside Katsura's ideal (Theorem \ref{T: sme}).
We highlight that the converse of the rigidity question has been exhibited to be considerably difficult even when $\sme$ is substituted by honest isomorphisms.
In this reduced form, it is better known as the Conjugacy Problem and it has been answered in several major classes of C*-correspondences, e.g. \cite{DavKak12, DavKat08, DavKat11, DavRoy11, Dor15, KatKri04, KakSha15, Sol04} to mention but a few.

We derive our motivation also from further researching $\sme$ and $\dee$.
The differences between these relations are subtle and it is natural to ask when they actually coincide.
We show that this is true for tensor algebras of aperiodic non-degenerate C*-correspondences.
In particular we prove that $E \sme F$ if and only if $\T_E^+ \dee \T_F^+$ if and only if $\T_E^+ \sme \T_F^+$ (Corollary \ref{C: aperiodic}).
This is quite pleasing as we incorporate a big class of operator algebras with approximate identities.
Recall that $\sme$ and $\dee$ are shown to be different even for unital operator algebras.

Our results read the same if $\sme$ is substituted by stable isomorphisms (Corollary \ref{C: stable}).
This follows directly from our analysis and the observation that stable isomorphism coincides with $\sme$ by a $\si$-TRO.
Notice that we do not make a distinction between unitary equivalence and isomorphism in the category of operator bimodules.
In particular we show that they coincide for C*-correspondences by viewing the linking algebra as the C*-envelope of an appropriate subalgebra (Proposition \ref{P: uneq=cc}).
Our methods then use a fundamental result of Blecher \cite{Ble97} which states that the stabilized tensor product coincides with the balanced Haagerup tensor product for non-degenerate C*-correspondences.

Although we include it in all statements, it is worth mentioning here that C*-correspondences (and thus all representations) are considered to be non-degenerate.
This is not an artifact for convenience.
Strong Morita equivalence automatically induces non-degeneracy of the C*-correspondences. 

%%%%%%%%%%%%%%%%%%%%%%%%%%%%%%%%
\section{Preliminaries}
%%%%%%%%%%%%%%%%%%%%%%%%%%%%%%%%

The reader should be well acquainted with the theory of operator algebras \cite{BleLeM04, Pau02}.
For an exposition on the C*-envelope s/he may refer to \cite{KakNotes}.
Furthermore the reader should be familiar with the general theory of Hilbert modules and C*-correspondences.
For example see \cite{Lan95, ManTro01} for Hilbert modules and \cite{Kat04, MuhSol98} for C*-correspondences.
We will give a brief introduction for purposes of notation and terminology.

%%%%%%%%%%%%%%%%%%%%%%%%%%%%%%%%
\subsection{C*-correspondences}
%%%%%%%%%%%%%%%%%%%%%%%%%%%%%%%%

A C*-correspondence ${}_A X_B$ is a right Hilbert module $X$ over $B$ along with a $*$-homomorphism $\phi_X \colon A \to \L(X)$.
It is called \emph{injective} (resp. \emph{non-degenerate}) if $\phi_X$ is injective (resp. non-degenerate).
It is called \emph{full} if $\sca{X,X} := \ol{\spn}\{\sca{x,y} \mid x,y \in X\} = B$.
It is called an \emph{imprimitivity bimodule} (or \emph{equivalence bimodule}) if it is full, injective and $\phi_X(A) = \K(X)$.

A \emph{(Toeplitz) representation} $(\pi,t)$ of ${}_A X_A$ on a Hilbert space $H$, is a pair of a $*$-homomorphism $\pi\colon A \rightarrow \B(H)$ and a linear map $t\colon X \rightarrow \B(H)$, such that
\[
\pi(a)t(x)=t(\phi_X(a)(x)) \qand t(x)^*t(y)=\pi(\sca{x,y}_X)
\]
for all $a\in A$ and $x,y\in X$. 
A representation $(\pi,t)$ automatically satisfies
\[
t(x) \pi(a) = t(x a) \foral x \in X, a \in A.
\] 
Moreover there exists a $*$-homomorphism $\psi_t \colon \K(X) \to \B(H)$ such that $\psi_t(\Theta_{x, y}) = t(x) t(y)^*$ \cite{KPW98}.
A representation $(\pi , t)$ is said to be \textit{injective} if $\pi$ is injective.
If $(\pi,t)$ is injective then $t$ is an isometry and $\psi_t$ is injective.

A crucial remark made by Katsura \cite{Kat04} is that $a \in \ker\phi_X^\perp \cap \phi_X^{-1}(\K(X))$ whenever $\pi(a) \in \psi_t(\K(X))$ for an injective representation $(\pi,t)$ of ${}_A X_A$.
This completes the analysis of Muhly-Solel \cite{MuhSol98} on covariant representations, by covering also the non-injective cases.
In more details let $J$ be an ideal in $\phi_X^{-1}(\K(X))$.
A representation $(\pi,t)$ is called \emph{$J$-covariant} if
\[
 \psi_t(\phi_X(a)) = \pi(a) \foral a\in J.
\]
The representations $(\pi,t)$ that are $J_{X}$-covariant for \emph{Katsura's ideal}
\[
J_X := \ker\phi_X^\bot \cap \phi_X^{-1}(\K(X)),
\]
are called \emph{covariant representations}. 
The ideal $J_X$ is the largest ideal on which the restriction of $\phi_X$ is injective.
An example of a covariant representation is given by taking the quotient map with respect to $\K(\F(X) J_X)$ on the Fock representation \cite{Kat04}.

The \emph{Toeplitz-Pimsner algebra} $\T_X$ is the universal C*-algebra with respect to the Toeplitz representations of ${}_A X_A$. 
The \emph{$J$-relative Cuntz-Pimsner algebra} $\O(J,X)$ is the universal C*-algebra with respect to the $J$-covariant representations of ${}_A X_A$.
The \emph{Cuntz-Pimsner algebra} $\O_X$ is the universal C*-algebra with respect to the covariant representations of ${}_A X_A$.
The \emph{tensor algebra} $\T_X^+$ in the sense of Muhly-Solel \cite{MuhSol98} is the algebra generated by the copies of $A$ and $X$ inside $\T_X$.

Due to the Fock representation, the copies of $A$ and $X$ inside $\T_X$ are isometric.
In addition $\T_X^+$ is embedded completely isometrically in $\O_X$, and $\cenv(\T_X^+) \simeq \O_X$.
This was accomplished under certain conditions by Fowler-Muhly-Raeburn \cite{FMR03}; all assumptions were finally removed by Katsoulis-Kribs \cite{KatKri06}.
Furthermore Kakariadis-Peters \cite{KakPet14} have shown that $J \subseteq J_X$ if and only if $A \hookrightarrow \O(J,X)$ if and only if $\T_X^+ \hookrightarrow \O(J, X)$.
The proof follows by \cite{KatKri06} and the diagram
\begin{align*}
\xymatrix@R=1em@C=4em{ \T_X \ar[rr] \ar[dr] & & \O_X\\
&  \O(J, X) \ar[ur] &
}
\end{align*}
where the arrows indicate canonical $*$-epimorphisms.
Therefore the $J$-relative algebras for $J \subseteq J_X$ are the only Pimsner algebras that contain an isometric copy of the C*-correspondences.
Beyond this point we ``lose'' information of the original data.

We say that a representation $(\pi,t)$ admits a gauge action $\{\be_z\}_{z \in \bT}$ if every $\be_z$ is an automorphism of $\ca(\pi,t)$ such that
\begin{equation}\label{eq:ga}
\be_z(\pi(a)) = \pi(a) \qand \be_z(t(x)) = z t(x)
\end{equation}
for all $a \in A$ and $x \in X$, and the family $\{\be_z\}_{z \in \bT}$ is point-norm continuous.
Since $\ca(\pi,t)$ is densely spanned by the monomials of the form $t(x_1)\cdots t(x_n) t(y_m)^* \cdots t(y_1)^*$, an $\eps/3$-argument implies that if $\{\be_z\}_{z \in \bT}$ is a family of $*$-homomorphisms of $\ca(\pi,t)$ that satisfies equation (\ref{eq:ga}) then it is point-norm continuous.

The Gauge-Invariant-Uniqueness-Theorem (GIUT) is a fundamental result for lifting representations of the C*-correspondence to operator algebras.
This type of result was initiated by an Huef and Raeburn for Cuntz-Krieger algebras \cite[Theorem 2.3]{HueRae97}.
Generalizations (under certain assumptions on the C*-correspondence) were given by Doplicher-Pinzari-Zuccante \cite[Theorem 3.3]{DPZ98}, Fowler-Muhly-Raeburn \cite[Theorem 4.1]{FMR03}, and Fowler-Raeburn \cite[Theorem 2.1]{FowRae99}. 
All assumptions were removed by Katsura \cite[Theorem 6.2, Theorem 6.4]{Kat04} by using a sharp analysis of cores and a conceptual argument involving short exact sequences. 
Another proof for $\O_X$ was provided by Muhly-Tomforde \cite{MuhTom04} by using a tail adding technique. 
A remarkable extension to the much broader class of pre-C*-correspondences is given by Kwa\'{s}niewski \cite{Kwa11}.
The second author \cite{Kak14} gave an alternative proof of the GIUT that treats all $J$-relative Cuntz-Pimsner algebras with $J \subseteq J_X$: \emph{a representation $(\pi, t)$ of ${}_A X_A$ lifts to a faithful representation of $\O(J, X)$ if and only if $(\pi,t)$ admits a gauge action, $\pi$ is injective and $J = \{a \in A \mid \pi(a) \in \psi_t(\K(X))\}$}.
Consequently if $(\pi,t)$ admits a gauge action and $\pi$ is injective then $\ca(\pi,t) \simeq \O(J,X)$ for
\[
J = \{a \in A \mid \pi(a) \in \psi_t(\K(X))\}. 
\]

We remark that when ${}_A X_A$ is non-degenerate then it suffices to consider just the representations $(\pi, t)$ with $\pi$ non-degenerate.
Indeed it is easy to check that $\pi(A)$ carries a c.a.i. for $\ca(\pi,t)$ and we can pass to an appropriate Hilbert subspace where $\pi$ acts non-degenerately.
This will always be the case in the current paper.

%%%%%%%%%%%%%%%%%%%%%%%%%%%%%%%%
\subsection{Tensor products}
%%%%%%%%%%%%%%%%%%%%%%%%%%%%%%%%

Let us recall the following results of Blecher \cite{Ble97} concerning tensor products.
For this subsection let us fix a right $A$-module $X$ and a C*-correspondence ${}_A Z_B$.
We further assume that $Z$ is non-degenerate; otherwise all that follow hold for the essential part $\ncl{\phi_Z(A)Z}$ of $Z$.

Blecher \cite[Theorem 3.1]{Ble97} has shown that the right Hilbert modules are asymptotic summands of the free modules $C_n(A) = \sum_{i=1}^n A$, i.e. for $X_A$ there are completely contractive $A$-module maps
\[
\phi_\al \colon X \to C_{n(\al)}(A) \qand \psi_\al \colon C_{n(\al)}(A) \to X
\]
such that $\psi_\al \phi_\al \to \id_X$ strongly.
One of the main consequences \cite[Theorem 4.3]{Ble97} is that the stabilized Haagerup tensor product $X \otimes_A^h Y$ is completely isometrically isomorphic to the stabilized Hilbert-module tensor product $X \otimes_A Y$.
This is derived by following the diagonals in the diagram
\[
\xymatrix@R=5mm@C=20mm{
X \otimes_A^h Y \ar@{-->}[dr]_{\phi_\al} \ar[rr]^{\id} \ar@{-->}[dd] & & X \otimes_A^h Y \\
& C_{n(\al)}(Y) \ar[ur]_{\psi_\al} \ar@{-->}[dr]^{\psi_\al} & \\
X \otimes_A Y \ar[ur]^{\phi_\al} \ar[rr]^{\id} & & X \otimes_A Y \ar[uu]
}
\]
and using the fact that $C_n(A) \otimes_A^h Y \simeq C_n(Y) \simeq C_n(A) \otimes_A Y$ completely isometrically.

%%%%%%%%%%%%%%%%%%%%%%%%%%%%%%%%
\subsection{Ternary rings of operators}
%%%%%%%%%%%%%%%%%%%%%%%%%%%%%%%%

A \emph{ternary ring of operators (TRO)} $M$ is a subspace of some $\B(H)$ such that $MM^*M \subseteq M$.
It then follows that $M$ is an imprimitivity bimodule over $A = \ncl{MM^*}$ and $B = \ncl{M^*M}$.
Every C*-correspondence $X$ (on some $A$) is a TRO.
However it gives an equivalence between $\K(X)$ and $\sca{X, X}$ which may differ from $A$ in principle.
Moreover, for the TRO $M$ there are two nets
\begin{align*}
a_t =
\sum_{i=1}^{l_t} m_i^t (m_i^t)^*
\qand
b_\lambda =
\sum_{i=1}^{k_\lambda } (n_i^\lambda)^* n_i^\lambda
\end{align*}
for $m_i^t, n_j^\lambda \in M$ such that all $[m_1^t, m_2^t, \dots, m_{l_t}^t]$ and $[(n_1^\la)^*, (n_2^\la)^*, \dots, (n_{k_\la}^\la)^*]$ are row contractions and
\[
\lim _t a_t m = m \qand \lim_\lambda m b_\lambda = m
\]
for all $m\in M$; see for example the proof of \cite[Theorem 6.1]{BMP00}.
In particular the C*-algebras $A = \ncl{MM^*}$ and $B = \ncl{M^*M}$ are $\si$-unital if and only if the nets $(a_t)$ and $(b_\la)$ can be chosen to be sequences; see \cite[Lemma 2.3]{Bro77}.
If any of the above happens then we will say that $M$ is a \emph{$\si$-TRO}.
These approximate identities provide an efficient tool for the study of strong Morita equivalence.
We include the following well-known technique for future reference.

%%%%%%%%%%%%%%%%%%%%%%%%%%%%%%%%
\begin{lemma}\label{L: H-technique}
Let $M \subseteq \B(H,K)$ be a TRO and let $A = \ncl{MM^*}$.
Let $X \subseteq \B(K)$ be an operator right module over $A$.
Then $X \otimes_A^h M \simeq \ncl{X M}$.
\end{lemma}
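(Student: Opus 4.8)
The plan is to realise the asserted isomorphism by the obvious multiplication map $x\otimes m\mapsto xm$ and to invert it in the limit using the row-contraction approximate identity carried by the TRO $M$. To set up the map, observe that the bilinear map $X\times M\to\B(H,K)$, $(x,m)\mapsto xm$, is completely contractive (a restriction of operator multiplication) and $A$-balanced, since $(xa)m=x(am)$ for $a\in A$; hence, by the universal property of the module Haagerup tensor product, it induces a complete contraction
\[
u\colon X\otimes_A^h M\longrightarrow\B(H,K),\qquad u(x\otimes m)=xm,
\]
whose range is the dense subspace $XM$ of $\ncl{XM}$. It remains to produce a completely contractive inverse in the limit.

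For this, let $b_\la=\sum_{i=1}^{k_\la}(n_i^\la)^*n_i^\la$ be the net associated with $M$, so that every row $\bigl[(n_1^\la)^*,\dots,(n_{k_\la}^\la)^*\bigr]$ is a contraction --- equivalently $\|b_\la\|\le 1$ --- and $m b_\la\to m$ for all $m\in M$. Define
\[
v_\la\colon\ncl{XM}\longrightarrow X\otimes_A^h M,\qquad v_\la(w)=\sum_{i=1}^{k_\la}w(n_i^\la)^*\otimes n_i^\la .
\]
This is meaningful because $w(n_i^\la)^*\in\ncl{XMM^*}\subseteq\ncl{X\,\ncl{MM^*}}=\ncl{XA}\subseteq X$. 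Writing $v_\la(w)$ as the Haagerup product of the row $\bigl[w(n_1^\la)^*,\dots,w(n_{k_\la}^\la)^*\bigr]=w\cdot\bigl[(n_1^\la)^*,\dots,(n_{k_\la}^\la)^*\bigr]$ against the column $\bigl[n_1^\la,\dots,n_{k_\la}^\la\bigr]^{\mathrm t}$, and observing that the first factor has norm at most $\|w\|$ (a fixed operator times a row contraction) while the column has norm $\|b_\la\|^{1/2}\le 1$, one obtains $\|v_\la(w)\|\le\|w\|$; the identical factorisation performed with matrix entries shows that $v_\la$ is completely contractive, uniformly in $\la$.

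Next I would compute the two compositions. On the one hand $u v_\la(w)=\sum_i w(n_i^\la)^*n_i^\la=w b_\la\to w$ for every $w\in\ncl{XM}$. On the other hand, on an elementary tensor,
\[
v_\la u(x\otimes m)=\sum_i x\,m(n_i^\la)^*\otimes n_i^\la=\sum_i x\otimes m(n_i^\la)^*n_i^\la=x\otimes m b_\la\longrightarrow x\otimes m,
\]
the middle equality being the $A$-balancing of the Haagerup tensor product together with $m(n_i^\la)^*\in MM^*\subseteq A$. Since elementary tensors span a dense subspace and $\|v_\la u\|\le 1$, we get $v_\la u\to\id$ in point norm, and by the same reasoning $v_\la^{(n)}u^{(n)}\to\id$ point-norm on $M_n(X\otimes_A^h M)$ for every $n$. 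Consequently, for $\xi\in M_n(X\otimes_A^h M)$,
\[
\|\xi\|=\lim_\la\bigl\|v_\la^{(n)}u^{(n)}(\xi)\bigr\|\le\bigl\|u^{(n)}(\xi)\bigr\|\le\|\xi\|,
\]
so $u$ is a complete isometry. Being a complete isometry, $u$ has closed range; that range contains the dense set $XM$ and is contained in $\ncl{XM}$, hence equals $\ncl{XM}$. Therefore $u$ is a completely isometric isomorphism of $X\otimes_A^h M$ onto $\ncl{XM}$.

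I expect the only genuine difficulty to be the bookkeeping around $v_\la$: arranging the row/column factorisation so that the row-contraction hypothesis on $\bigl[(n_i^\la)^*\bigr]_i$ and the bound $\|b_\la\|\le 1$ land on the correct factors, and carrying the estimate through matrix amplifications so as to obtain complete (rather than merely isometric) bounds. Once that is in place the convergence of $u v_\la$ and $v_\la u$, and hence the conclusion, is routine, the uniform bounds $\|u\|_{\mathrm{cb}},\|v_\la\|_{\mathrm{cb}}\le 1$ allowing one to pass from elementary tensors and finite products to the full completions.
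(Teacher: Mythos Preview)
Your proof is correct and follows essentially the same approach as the paper: both define the multiplication map $u(x\otimes m)=xm$ and exploit the TRO's row-contraction net $n_\la$ together with $A$-balancing to show it is completely isometric. The only difference is packaging---you organise the estimate by constructing explicit approximate inverses $v_\la$ and arguing $v_\la u\to\id$, whereas the paper writes the same computation as a direct norm chain $\|\sum x_i\otimes m_i\|-\eps\le\|\sum(x_i m_i n_\la^*)\otimes n_\la\|\le\|\sum x_i m_i\|$.
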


\begin{proof}
Let the completely contractive $A$-balanced bilinear map $X \times M \to \ncl{X M}$ defined by $(x, m) \mapsto x m$.
This induces a completely contractive map
\[
\Phi \colon X \otimes ^h_A M \mapsto \ncl{X M}: x \otimes_A m \to x m.
\]
Let $n_\la^* = [(n_1^\la)^*, \dots, (n_{k_\la}^\la)^*]$ be a net provided by $M$; i.e.  $n_i^\la \in M$, $\nor{n_\la} \leq 1$ and
\[
\lim_\la m n_\la^* n_\la = m \foral m\in M.
\]
Fix $x_1, \dots, x_n\in X$ and $m_1, \dots, m_n \in M$.
For $\eps >0$ there exists $\la$ such that
\begin{align*}
\| \sum_{i=1}^n x_i \otimes m_i \| -\eps
& \leq
\| \sum_{i=1}^n x_i \otimes(m_i n_\la^* n_\la) \|
 =
\| \sum_{i=1}^n (x_i m_i n_\la^*) \otimes n_\la \| \\
& \leq
\| \sum_{i=1}^n x_i m_i n_\la^* \|
\leq
\| \sum_{i=1}^n x_i m_i \|.
\end{align*}
Therefore $\Phi$ is isometric.
A similar argument holds for the matrix norms and the proof is complete.
\end{proof}

%%%%%%%%%%%%%%%%%%%%%%%%%%%%%%%%
\subsection{Strong Morita equivalence for operator algebras}
%%%%%%%%%%%%%%%%%%%%%%%%%%%%%%%%

There is a rich Morita theory for C*-algebras produced by Rieffel \cite{Rie74}, Brown \cite{Bro77} and Brown-Green-Rieffel \cite{BGR77}.
Morita theory extends in various ways to nonselfadjoint operator algebras with the most influential being the direction of Blecher-Muhly-Paulsen \cite{BMP00} for algebras that admit an approximate unit.
For the sake of completeness we include a short description.

Suppose that $\A$ and $\B$ are operator algebras with c.a.i.'s and let the bimodules ${}_\A M_\B$ and ${}_\B N_\A$.
Following \cite[Definition 3.1]{BMP00}, a \emph{Morita context} is a system $(\A,\B, M, N, (\cdot,\cdot), [\cdot,\cdot])$ where the maps
\[
(\cdot,\cdot) \colon M \times N \to \A \qand [\cdot,\cdot] \colon N \times M \to \B
\]
are completely bounded, bilinear and balanced satisfying the following three properties:
\begin{enumerate}
\item[(A)] For all $m_1, m_2 \in M$ and $n_1, n_2 \in N$ we have that $(m_1, n_1) m_2 = m_1 [n_1, m_2]$ and $[n_1, m_1] n_2 = n_1 (m_1, n_2)$;
\item[(G)] The map from $M \otimes^h N$ to $\A$ induced by $(\cdot, \cdot)$ is a complete quotient map onto $\A$;
\item[(P)] The map from $N \otimes^h M$ to $\B$ induced by $[\cdot, \cdot]$ is a complete quotient map onto $\B$.
\end{enumerate}
If such a system exists then $\A$ and $\B$ are called \emph{strongly Morita equivalent} (notation, $\A \sme \B$) \cite[Definition 3.6]{BMP00}.
The initials (A), (G) and (P) stand for associativity, generator and projective respectively.
When $\A$ and $\B$ are C*-algebras then this definition coincides with that of \cite{Rie74}, where $M$ \emph{can be chosen} to be an imprimitivity bimodule and $N$ be $M^*$.

There is a concrete version of the Morita context.
One of the main ingredients concerns the form of the decomposition of two algebras.
(Notice here that we do not assume a priori that the operator algebras have an approximate unit.)

%%%%%%%%%%%%%%%%%%%%%%%%%%%%%%%%
\begin{definition}
Let $\A$ and $\B$ be operator algebras.
We say that they are \emph{decomposable} (notation, $\A \ed \B$) if there are non-degenerate completely isometric representations
\[
\al \colon \A \to \B(H) \qand \be \colon \B \to \B(K)
\]
and bimodules ${}_{\al(\A)} M_{\be(\B)} \subseteq \B(K, H)$ and ${}_{\be(\B)} N_{\al(\A)} \subseteq \B(H, K)$ such that
\[
\al(\A) = \ncl{M \cdot N} \qand \be(\B) = \ncl{N \cdot M}.
\]
\end{definition}

By definition $\ed$ implies a Morita context $(\A, \B, M, N, \rsca{\cdot, \cdot}, \lsca{\cdot, \cdot})$ that satisfies just property (A) in the sense of Blecher-Muhly-Paulsen \cite{BMP00}.
It is unclear (and probably not true) that $\ed$ is transitive in general.
Remarkably though $\ed$ is an equivalence relation on unital operator algebras \cite[Proposition 3.3]{BMP00}.
In fact Blecher-Muhly-Paulsen \cite{BMP00} show that when $\A$ and $\B$ have c.a.i.'s then being strongly Morita equivalent coincides with having
\[
\A \simeq M \otimes_{\B}^h N \qand \B \simeq N \otimes_{\A}^h M.
\]
This depends on an elegant decomposition of the approximate unit.
Apparently the existence of approximate units is necessary.
It can be shown that $\sme$ is equivalent to having $\ed$ and row contractions from $M$ and $N$ that reconstruct c.a.i.'s for $\A$ and $\B$.

The importance of Morita Theory appears through the Morita Theorems.
Let us give a description that depicts the essential points.
Morita Theorem I suggests that Morita equivalence implies an equivalence of the representations by tensoring (within the category) with the bimodules of the Morita context.
Morita Theorem II suggests that an equivalence of representations gives rise to a Morita context, whereas Morita Theorem III implies that such equivalences are given just by tensoring with appropriate bimodules.
Finally Morita Theorem IV suggests that if the algebras satisfy a countability condition (the c.a.i.'s are countable) then Morita equivalence coincides with stable isomorphism.
Recall that $\A$ and $\B$ are \emph{stably isomorphic} if $\A \otimes \K \simeq \B \otimes \K$ by a completely isometric isomorphism, where $\K$ is the compacts on $\ell^2$ and the tensor product is the spatial one.

Strong Morita equivalence for approximately unital operator algebras satisfies the first three parts of Morita Theory \cite{Ble01, BMP00}. 
In particular, two approximately unital operator algebras are strongly Morita equivalent if and only if their categories of left operator modules are equivalent (via a completely contractive functor) \cite{BMP00}.

Eleftherakis \cite{Ele14} introduced a stronger notion of equivalence.
Two operator algebras $\A$ and $\B$ are \emph{strongly $\Delta$-equivalent} (notation, $\A \dee \B$) if there are non-degenerate completely isometric representations
\[
\al \colon \A \to \B(H) \qand \be \colon \B \to \B(K)
\]
and a TRO $M \subseteq \B(K, H)$ such that
\[
\al(\A) = \ncl{M \be(\B) M^*} \qand \be(\B) = \ncl{M^* \al(\A) M}.
\]
In \cite{Ele14} it is shown that $\dee$ is an equivalence relation.
Even though $\dee$ is originally defined on approximately unital operator algebras in \cite{Ele14}, the elements we record here still hold for non-unital cases.
This is exhibited in \cite{EleKak16} where $\dee$ is considered for operator spaces.

As noted $\dee$ and $\sme$ coincide with the usual strong Morita equivalence when restricted to C*-algebras.
However a fundamental difference between $\dee$ and $\sme$ concerns stable isomorphisms when passing to general operator spaces.
Due to \cite{BGR77}, Morita Theorem IV holds when $\A$ and $\B$ are $\si$-unital C*-algebras.
This is not true for $\sme$ on nonselfadjoint operator algebras, even when the operator algebras are unital \cite[Example 8.2]{BMP00}.
On the other hand it is shown in \cite[Theorem 3.2]{Ele14} that $\dee$ on operator algebras coincides with stable isomorphism under the appropriate $\si$-unital condition.
As a consequence $\dee$ is strictly stronger than $\sme$.

%%%%%%%%%%%%%%%%%%%%%%%%%%%%%%%%
\section{Bimodule structure}
%%%%%%%%%%%%%%%%%%%%%%%%%%%%%%%%

A key role in Morita Theory is played by the linking algebra.
This construction induces an operator bimodule structure on C*-correspondences.
In this section we use it to show that isomorphism of C*-correspondences is preserved when passing to the category of operator bimodules, and vice versa.

Every right Hilbert module $X_A$ comes with an operator module structure.
This can be verified by seeing $X_A$ inside its \emph{augmented linking algebra}
\[
\begin{bmatrix} A & X^* \\ X & \L(X) \end{bmatrix} :=
\{ \begin{bmatrix} a & y^* \\ x & u \end{bmatrix} \mid a \in A, x, y \in X, u \in \L(X) \}
\]
where $X^*$ is the adjoint module $\K(X,A)$ of $X$ and the multiplication rule is given by
\[
\begin{bmatrix} a_1 & y_1^* \\ x_1 & u_1 \end{bmatrix}
\cdot
\begin{bmatrix} a_2 & y_2^* \\ x_2 & u_2 \end{bmatrix}
=
\begin{bmatrix} a_1a_2 + \sca{y_1, x_2}  & (y_2 a_1^*)^* + (u_2^*(y_1))^* \\ x_1 a_2 + u_1(x_2) & \Theta_{x_1, y_2} + u_1 u_2 \end{bmatrix} .
\]
The augmented linking algebra becomes a C*-algebra over an operator norm when the matrices are seen to act on the right Hilbert module $A + X$.
The \emph{linking algebra} is defined as the C*-subalgebra
\[
\fL(X) = \begin{bmatrix} A & X^* \\ X & \K(X) \end{bmatrix}.
\]
When $X$ is a C*-correspondence over $A$ then it admits the operator bimodule structure by viewing the left action as the matrix multiplication 
\[
\begin{bmatrix} 0 & 0 \\ \phi_X(a) x & 0 \end{bmatrix}
=
\begin{bmatrix} 0 & 0 \\ 0 & \phi_X(a) \end{bmatrix} \cdot \begin{bmatrix} 0 & 0 \\ x & 0 \end{bmatrix}.
\]

%%%%%%%%%%%%%%%%%%%%%%%%%%%%%%%%
\begin{proposition}\label{P: uneq=cc}
Let ${}_A X_A$ and ${}_B Y_B$ be C*-correspondences.
Then ${}_A X_A$ and ${}_B Y_B$ are completely isometric as operator bimodules if and only if ${}_A X_A$ and ${}_B Y_B$ are unitarily equivalent as C*-correspondences.
\end{proposition}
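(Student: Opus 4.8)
The plan is to handle the two implications separately. Throughout I read ``${}_A X_A$ and ${}_B Y_B$ completely isometric as operator bimodules'' as: there are a completely isometric algebra isomorphism $\rho\colon A\to B$ and a surjective complete isometry $u\colon X\to Y$ with $u(\phi_X(a)x)=\phi_Y(\rho(a))\,u(x)$ and $u(xa)=u(x)\rho(a)$. One first notes that $\rho$ is automatically a $*$-isomorphism, since a completely isometric isomorphism of operator algebras extends to a $*$-isomorphism of $\ca$-envelopes and $\cenv(A)=A$, $\cenv(B)=B$. The reverse implication is then two lines: a unitary equivalence of $\ca$-correspondences is precisely such a pair $(\rho,u)$ which in addition satisfies $\sca{u(x),u(y)}_Y=\rho(\sca{x,y}_X)$, and since $\rho^{(n)}$ is isometric for all $n$ while $\|[x_{ij}]\|_{M_n(X)}^2=\|[\sum_k\sca{x_{ki},x_{kj}}_X]_{ij}\|_{M_n(A)}$, such a $u$ is forced to be a complete isometry and is manifestly a bimodule map over $\rho$.

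For the forward implication I would work inside the augmented linking algebra $\begin{bmatrix}A & X^*\\ X & \L(X)\end{bmatrix}$ of $X$ with the operator algebra
\[
\mathcal{A}_X:=\begin{bmatrix} A & 0 \\ X & \phi_X(A)\end{bmatrix},
\]
a genuine closed subalgebra because $X$ is a left $\phi_X(A)$- and right $A$-module. The strategy rests on two points. First, the operator algebra structure of $\mathcal{A}_X$ should depend only on ${}_A X_A$ as an operator bimodule (together with the operator algebra structures of $A$ and of $\phi_X(A)\simeq A/\ker\phi_X$, which are themselves read off from that data); granting this, the pair $(\rho,u)$ induces a completely isometric isomorphism
\[
\Psi\colon\mathcal{A}_X\to\mathcal{A}_Y,\qquad \begin{bmatrix}a&0\\x&\phi_X(b)\end{bmatrix}\longmapsto\begin{bmatrix}\rho(a)&0\\u(x)&\phi_Y(\rho(b))\end{bmatrix},
\]
which is well defined since $\phi_X(a)=0$ forces $\phi_Y(\rho(a))u(x)=u(\phi_X(a)x)=0$ for all $x$, hence $\rho(\ker\phi_X)=\ker\phi_Y$ as $u$ is onto, and which is multiplicative by a one-line $2\times 2$ matrix check. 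Second, $\cenv(\mathcal{A}_X)$ is the $\ca$-algebra it generates, namely $\begin{bmatrix}A&X^*\\ X&\phi_X(A)+\K(X)\end{bmatrix}$: a Shilov boundary ideal $\mathfrak J$ meets $\mathcal{A}_X$, and hence the three ``diagonal'' corners and (being $*$-closed) also the $X^*$-corner, in $\{0\}$; compressing a would-be nonzero $j\in\mathfrak J$ by approximate units, together with the facts that $\L(X)$ acts faithfully on $X$ and that $\begin{bmatrix}0&0\\0&j_{22}\end{bmatrix}\begin{bmatrix}0&0\\X&0\end{bmatrix}\subseteq \mathfrak J\cap\begin{bmatrix}0&0\\X&0\end{bmatrix}=\{0\}$, forces every entry of $j$ to vanish. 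The point that makes this run is that all of $A$ already sits inside $\mathcal{A}_X$ as a corner, so no part of $A$ can be annihilated.

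Once these two points are in place, $\Psi$ extends by the universal property of the $\ca$-envelope to a $*$-isomorphism $\Phi$ between the two linking $\ca$-algebras, with $\Phi|_A=\rho$ and $\Phi|_X=u$. Since for $x,y\in X$ the product $x^*y$ taken in $\begin{bmatrix}A&X^*\\ X&\phi_X(A)+\K(X)\end{bmatrix}$ equals $\sca{x,y}_X$ in the $(1,1)$-corner,
\[
\rho(\sca{x,y}_X)=\Phi(x^*y)=\Phi(x)^*\Phi(y)=u(x)^*u(y)=\sca{u(x),u(y)}_Y,
\]
which is exactly the inner-product identity that, together with the two module identities already in hand, makes $(\rho,u)$ a unitary equivalence of $\ca$-correspondences.

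The main obstacle is the first point: that the operator algebra structure of $\mathcal{A}_X$ is intrinsic to ${}_A X_A$ as an operator bimodule. This is essentially the assertion that a surjective completely isometric right-module map between Hilbert $\ca$-modules preserves the inner product — equivalently, since every $\ca$-correspondence is a TRO, that a surjective complete isometry between TROs is a ternary isomorphism, so that $u(x\sca{y,z}_X)=u(x)\sca{u(y),u(z)}_Y$. (Note that combining this with $u(x\sca{y,z}_X)=u(x)\rho(\sca{y,z}_X)$ and the surjectivity of $u$ already yields the inner-product identity directly, after observing that $\rho$ carries the annihilator of $\sca{X,X}$ onto that of $\sca{Y,Y}$.) Reducing this to the elementary statement that positive $p,q$ in a $\ca$-algebra with $\|c^*pc\|=\|c^*qc\|$ for every $c$ must coincide, and then polarizing, is where the genuine content lies; the boundary-ideal computation, the construction of $\Psi$, and the extraction of the inner product are all routine bookkeeping.
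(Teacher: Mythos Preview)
Your overall architecture mirrors the paper's: build an auxiliary operator algebra from the module data, identify its C*-envelope as (a variant of) the linking algebra, and read the inner product off the resulting $*$-isomorphism. The divergence is in the choice of auxiliary algebra, and that is exactly where your acknowledged gap sits. The paper uses the smaller $\A(X)=\begin{bmatrix}A&0\\X&0\end{bmatrix}$, which encodes only the \emph{right} module structure; the crucial step---that a completely isometric right-module map $(\gamma,u)$ induces a completely isometric algebra map $\A(X)\to\A(Y)$---is supplied by \cite[Remark~3.6.1]{BleLeM04}. With that in hand Lemma~\ref{L: cenv linking} gives $\cenv(\A(X))=\fL(X)$, the induced $*$-isomorphism of linking algebras produces an adjoint for $u$, and polar decomposition upgrades the adjointable invertible $u$ to a unitary, with left-module compatibility coming for free.

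Your ``first point'' for $\mathcal A_X=\begin{bmatrix}A&0\\X&\phi_X(A)\end{bmatrix}$ is not covered by that reference, and as you yourself note, establishing it is tantamount to the inner-product identity you are after---so the $\mathcal A_X$/envelope apparatus, as set up, is circular. Your alternate route via ``a surjective complete isometry of TROs is a ternary isomorphism'' is a valid theorem, but you have not proved it, the reduction to ``$\|c^*pc\|=\|c^*qc\|$ for all $c$ implies $p=q$'' is less elementary than advertised, and---most tellingly---once you have the ternary identity you obtain $\sca{u(x),u(y)}_Y=\rho(\sca{x,y}_X)$ directly, rendering the whole $\mathcal A_X$ construction superfluous. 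The clean fix is to drop the $(2,2)$ corner, invoke the literature for the completely isometric step on $\begin{bmatrix}A&0\\X&0\end{bmatrix}$, and handle the left action afterwards via polar decomposition, exactly as the paper does.
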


The first observation is that if $v \in \L(X, Y)$ is invertible then the polar decomposition $v = u |v|$ gives a unitary $u \in \L(X,Y)$.
If $v$ is further a left module map then so is $u$.
Therefore we now restrict our attention just to right Hilbert modules.
We will use the following lemma.

%%%%%%%%%%%%%%%%%%%%%%%%%%%%%%%%
\begin{lemma}\label{L: cenv linking}
Let $X_A$ be a right Hilbert module.
Then we have that
\[
\cenv(\begin{bmatrix} A & 0 \\ X & 0 \end{bmatrix}) = \fL(X) .
\]
\end{lemma}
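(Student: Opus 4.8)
The plan is to exhibit $\N := \begin{bmatrix} A & 0 \\ X & 0 \end{bmatrix}$ as $\fL(X)p$ for a \emph{full} projection $p$ in the multiplier algebra of $\fL(X)$, and then to invoke the Shilov–boundary–ideal description of the C*-envelope: for any C*-cover of an operator algebra the envelope is the quotient by the largest ideal on which the quotient map is still completely isometric, and fullness of $p$ forces that ideal to vanish.

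First I would do the bookkeeping. Since $\K(A)=A$ and $\K(X,A)=X^*$, the linking algebra is $\fL(X)=\K(A\oplus X)$, hence $M(\fL(X))=\L(A\oplus X)$ and $p:=\id_A\oplus 0\in\L(A\oplus X)$ is a projection with $\fL(X)p=\N$ and $p\,\fL(X)=\N^*$. Consequently $\overline{\fL(X)\,p\,\fL(X)}=\overline{\N\,\N^*}$, and computing the corners of $\N\N^*$ and using $\overline{XX^*}=\K(X)$, $\overline{XA}=X$, $\overline{AX^*}=X^*$, $\overline{A^2}=A$, this closure is all of $\fL(X)$. In particular $\fL(X)=\cstar(\N)$, so $\fL(X)$ is a C*-cover of the operator algebra $\N$ and $p$ is a full projection in $M(\fL(X))$.

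Now let $\mathfrak{J}$ be the Shilov boundary ideal of this C*-cover, i.e.\ the largest (closed, two-sided) ideal of $\fL(X)$ on which the quotient map $q\colon\fL(X)\to\fL(X)/\mathfrak{J}$ restricts to a complete isometry on $\N$; then $\cenv(\N)\cong\fL(X)/\mathfrak{J}$, and $q|_{\N}$ is in particular injective. The crux is the observation that for every $j\in\mathfrak{J}$ one has $jp\in\mathfrak{J}\cap\fL(X)p=\mathfrak{J}\cap\N$ with $q(jp)=0$, so injectivity of $q|_{\N}$ forces $jp=0$; thus $\mathfrak{J}p=0$, and passing to adjoints also $p\,\mathfrak{J}=0$. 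Finally, since $\mathfrak{J}$ is an ideal and $p$ is full,
\[
\mathfrak{J} \;=\; \overline{\mathfrak{J}\,\fL(X)} \;=\; \overline{\mathfrak{J}\;\fL(X)\,p\,\fL(X)} \;=\; \overline{\big(\mathfrak{J}\,\fL(X)\big)p\,\fL(X)} \;\subseteq\; \overline{\mathfrak{J}\,p\,\fL(X)} \;=\; \{0\},
\]
whence $\mathfrak{J}=0$ and $\cenv(\N)=\fL(X)$, as claimed.

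The real content of the proof is the single line $\mathfrak{J}p\subseteq\mathfrak{J}\cap\N$ together with injectivity of $q$ on $\N$; everything else is formal. The two points that demand a little care are purely organisational: checking that $p=\id_A\oplus 0$ genuinely lies in $M(\fL(X))$ with $\fL(X)p=\N$ (a short block computation against the multiplication rule already displayed), and invoking the boundary-ideal characterisation of $\cenv(\N)$ even though $\N$ carries only a one-sided contractive approximate identity — but that characterisation is valid for an arbitrary C*-cover of an arbitrary operator algebra. I do not anticipate any essential obstacle.
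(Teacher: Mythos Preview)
Your proof is correct and genuinely different from the paper's. The paper argues concretely: it fixes a non-degenerate representation of $\cenv(\A(X))$, pushes the c.a.i.\ of $\fL(X)$ through the canonical $*$-epimorphism $\Phi\colon\fL(X)\to\cenv(\A(X))$ to produce complementary projections $P,Q$, and then decomposes $\Phi$ as a $2\times 2$ block map $(\pi,t,\psi)$. Injectivity of $\Phi$ is then read off entry by entry from $f^*f$, $ff^*$ and $fg$ for $f\in\ker\Phi$, using that $\pi$ and $t$ are complete isometries on $A$ and $X$. Your argument replaces this explicit matrix analysis with the structural observation that $\N=\fL(X)p$ for a \emph{full} projection $p\in M(\fL(X))$, and then the single line $\mathfrak{J}p\subseteq\mathfrak{J}\cap\N=\{0\}$ together with fullness kills the Shilov ideal. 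What you gain is a proof that is shorter, coordinate-free, and immediately reusable in any situation where an operator algebra arises as a full corner of a C*-algebra; what the paper's approach gains is that it is entirely self-contained and does not appeal to the boundary-ideal characterisation of the C*-envelope in the non-unital (indeed only one-sidedly approximately unital) setting, which you correctly flag as the one point requiring outside input. Both routes are standard and both are clean; yours is the more conceptual one.
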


\begin{proof}
For simplicity let us write $\A(X)$ for the operator algebra $\begin{bmatrix} A & 0 \\ X & 0 \end{bmatrix}$.
It is clear that $\fL(X)$ is a C*-cover of $\A(X)$.
Therefore there exists a unique *-epimorphism $\Phi \colon \fL(X) \to \cenv(\A(X))$ such that
\[
\Phi(\begin{bmatrix} a & 0 \\ x & 0 \end{bmatrix}) = i(a) + i(x) \foral a \in A, x \in X
\]
for the embedding $i \colon \A(X) \to \cenv(\A(X))$.
Let $H$ be a Hilbert space where $\cenv(\A(X))$ acts non-degenerately.

Since $A$ acts non-degenerately on the right of $X$ and $\K(X)$ acts non-degene\-ra\-tely on the left of $X$ we can choose a c.a.i. for $\fL(X)$ of the form $(a_i \oplus k_i)$ for a c.a.i. $(a_i)$ of $A$ and a c.a.i. $(k_i)$ for $\K(X)$. 
Since $\Phi$ is surjective then $(\Phi(a_i) + \Phi(k_i))$ is a c.a.i. for $\cenv(\A(X))$.
Write
\[
P := \text{sot-}\lim \Phi(a_i) \qand Q = \text{sot-}\lim_i \Phi(k_i).
\]
Therefore we obtain $\Phi(x) = Q \Phi(x) P$ for all $x \in X$.
We write
\[
\pi = P \Phi|_A P \, , \,\, t = Q \Phi|_X P \, , \,\, \psi = Q \Phi|_{\K(X)} Q.
\]
Now $P$ and $Q$ are complementary projections and thus we get that
\[
\Phi(\begin{bmatrix} a & y^* \\ x & k \end{bmatrix}) = \begin{bmatrix} \pi(a) & t(y)^* \\ t(x) & \psi(k) \end{bmatrix}.
\]
It is clear that $\pi$ and $\psi$ are *-homomorphisms such that $\psi(k) t(x) \pi(a) = t(k(x)a)$ and that both $\pi$ and $t$ are complete isometries as $A, X \subseteq \A(X)$.
We want to show that $\ker\Phi = (0)$. 
To reach contradiction let
\[
0 \neq f = \begin{bmatrix} a & y^* \\ x & k \end{bmatrix} \in \ker\Phi.
\]
By applying $\Phi$ on $f^*f$ and restricting to the $(1,1)$-entry we get that $\pi(a^*a + \sca{x,x}) = 0$.
But $\pi$ is a complete isometry as $A \subseteq \A(X)$ and thus $a=0$ and $x = 0$.
Similarly applying to $f f^*$ gives that $y = 0$.
By applying on $f g$ for any $g \in \fL(X)$ we also get that $t(k(z)) = 0$ for all $z \in X$.
Hence we have that $k(z) = 0$ for all $z \in X$, i.e. $k=0$.
This is a contradiction, and the proof is complete.
\end{proof}

\noindent \emph{Proof of Proposition \ref{P: uneq=cc}.}
If there is a unitary equivalence $(\ga, u)$ then it is clear that $\fL(X)$ and $\fL(Y)$ are $*$-isomorphic.
Hence $X$ and $Y$ are completely isometric as operator bimodules.

Conversely suppose there is completely isometric right module map $(\ga, u)$; we need to show that $u$ is adjointable.
As in \cite[Remark 3.6.1]{BleLeM04} we get the completely isometric isomorphism
\[
j \colon \A(X) \to \A(Y)
:
\begin{bmatrix} a & 0 \\ x & 0 \end{bmatrix} \mapsto \begin{bmatrix} \ga(a) & 0 \\ u(x) & 0 \end{bmatrix}.
\] 
By Lemma \ref{L: cenv linking} then the linking algebras are $*$-isomorphic by some $\Phi$.
Let $v \colon Y \to X$ such that
\[
\begin{bmatrix} 0 & 0 \\ v(y) & 0 \end{bmatrix}
=
\Phi^{-1}(\begin{bmatrix} 0 & 0 \\ y & 0 \end{bmatrix}).
\]
Then we obtain
\begin{align*}
\ga(\sca{x, v(y)}_X)
& =
\Phi(\sca{x, v(y)}_X)
=
\Phi(x)^* y
=
\sca{u(x), y}_Y
\end{align*}
where we omit the zero entries of the matrices.
Then $u^* = v$ and the proof is complete.
\hfill{$\qedsymbol$}

%%%%%%%%%%%%%%%%%%%%%%%%%%%%%%%%
\begin{remark}
We remark that Proposition \ref{P: uneq=cc} does not hold for bounded bimodule maps.
Dor-On \cite{Dor15} illustrates this by examining a particular class of C*-correspondences related to weighted partial systems.
In particular it is shown that unitary equivalence and bounded isomorphisms correspond to different notions of equivalences of the original data.
Even more they reflect isometric and bounded, respectively, isomorphisms of the tensor algebras.
\end{remark}

%%%%%%%%%%%%%%%%%%%%%%%%%%%%%%%%
\section{Strong Morita equivalence for C*-correspondences}
%%%%%%%%%%%%%%%%%%%%%%%%%%%%%%%%

Muhly and Solel \cite{MuhSol00} initiated the study of strong Morita equivalence for C*-correspondences.
Namely ${}_A E_A$ and ${}_B F_B$ are \emph{strongly Morita equivalent} (notation, $E \sme F$) if there exists a TRO ${}_A M_B$ such that
\[
E \otimes_A M \simeq M \otimes_B F \qand M^* \otimes_A E \simeq F \otimes_B M^*.
\]
Since the tensor norm is sub-multiplicative we have that strongly Morita equivalent C*-correspondences must be non-degenerate.
Furthermore strong Morita equivalence coincides with having a TRO ${}_A M_B$ such that
\[
E \simeq M \otimes_B F \otimes_B M^* \qand F \simeq  M^* \otimes_A E \otimes_A M.
\]
Again $E$ and $F$ must be non-degenerate.
On the other hand if $E$ and $F$ are non-degenerate then it is easy to check that the $E$-equations give the $F$-equations.

In analogy to $\sme$ on C*-algebras, we obtain an equivalence between representations of strongly Morita equivalent C*-correspondences.

%%%%%%%%%%%%%%%%%%%%%%%%%%%%%%%%
\begin{proposition}\label{P: same repn}
Let ${}_A E_A$ and ${}_B F_B$ be strongly Morita equivalent by a TRO $M$.
Then for every non-degenerate (injective) representation $(\pi, t)$ of ${}_A E_A$ on a Hilbert space $K$ there exists a non-degenerate (resp. injective) representation $(\si, s)$ of ${}_B F_B$ on a Hilbert space $H$ and a TRO-representation $\phi$ of $M$ in $\B(H, K)$ such that
\[
\ncl{t(E)} = \ncl{\phi(M) s(F) \phi(M)^*}
\]
and
\[
\ncl{s(F)} = \ncl{\phi(M)^* t(E) \phi(M)}.
\]
If $(\pi,t)$ admits a gauge action then so does $(\si,s)$.
\end{proposition}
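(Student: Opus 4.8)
The plan is to induce $(\pi,t)$ along $M$ in the sense of Rieffel. Regard $M$ as an imprimitivity bimodule ${}_AM_B$ and form the interior tensor product $H:=M^*\otimes_AK$ of the right Hilbert $A$-module $M^*$ with the left $A$-module $K$ (via $\pi$). Let $\si\colon B\to\B(H)$ be the induced representation, $\si(b)(m^*\otimes\xi)=(bm^*)\otimes\xi$, and define $\phi\colon M\to\B(H,K)$ by $\phi(m)(n^*\otimes\xi)=\pi(mn^*)\xi$. One checks directly that $\phi(m)^*\xi=m^*\otimes\xi$, so that $\phi$ is a TRO-representation with $\phi(m_1)\phi(m_2)^*=\pi(m_1m_2^*)$ and $\phi(m_1)^*\phi(m_2)=\si(m_1^*m_2)$; in particular $\ncl{\phi(M)\phi(M)^*}=\pi(A)$, $\ncl{\phi(M)^*\phi(M)}=\si(B)$, and $\pi(a)\phi(m)=\phi(am)$, $\phi(m)\si(b)=\phi(mb)$. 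Finally, using the C*-correspondence isomorphism $F\simeq M^*\otimes_AE\otimes_AM$ provided by $E\sme F$, define $s\colon F\to\B(H)$ on elementary tensors by $s(m^*\otimes x\otimes n)=\phi(m)^*t(x)\phi(n)$.

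I would first verify that $(\si,s)$ is a non-degenerate Toeplitz representation of ${}_BF_B$, injective whenever $(\pi,t)$ is. That $s$ is well defined on $M^*\otimes_AE\otimes_AM$ amounts to checking that the defining formula respects the two interior $A$-balancings --- which follows from $\pi(a)\phi(m)=\phi(am)$ together with the Toeplitz relations $\pi(a)t(x)=t(\phi_E(a)x)$ and $t(x)\pi(a)=t(xa)$ --- and that it is contractive, which follows from the identity $s(f_1)^*s(f_2)=\si(\sca{f_1,f_2}_F)$; the latter is a direct computation using $t(x_1)^*t(x_2)=\pi(\sca{x_1,x_2}_E)$, $\phi(m_1)\phi(m_2)^*=\pi(m_1m_2^*)$, and the explicit form of the $B$-valued inner product on $M^*\otimes_AE\otimes_AM$. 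Since this identity gives $\|s(f)\|^2=\|\si(\sca{f,f}_F)\|\le\|f\|^2$, $s$ extends to all of $F$, and the remaining relation $\si(b)s(f)=s(\phi_F(b)f)$ extends by continuity from an analogous check on elementary tensors. Non-degeneracy of $\si$ follows from $\ncl{BM^*}=M^*$, and faithfulness of $\si$ from the fact that induction along an imprimitivity bimodule preserves faithful representations; when $\si$ is injective, $s$ is automatically isometric.

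Next I would establish the two decomposition identities. The identity $\ncl{s(F)}=\ncl{\phi(M)^*t(E)\phi(M)}$ is immediate from the definition of $s$, since elementary tensors span a dense subspace of $F$. For $\ncl{t(E)}=\ncl{\phi(M)s(F)\phi(M)^*}$, one substitutes $s(F)=\ncl{\phi(M)^*t(E)\phi(M)}$ and uses $\ncl{\phi(M)\phi(M)^*}=\pi(A)$ to get
\[
\ncl{\phi(M)s(F)\phi(M)^*}=\ncl{\pi(A)\,t(E)\,\pi(A)}=\ncl{t(\phi_E(A)EA)},
\]
which equals $\ncl{t(E)}$ because non-degeneracy of $E$ gives $\ncl{\phi_E(A)EA}=E$.

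The step I expect to be the main obstacle is transferring a gauge action. Assume $(\pi,t)$ admits a gauge action $\{\be_z\}_{z\in\bT}$ on $\ca(\pi,t)$. I would pass to the linking picture: $\mathfrak D_{12}:=\ncl{\ca(\pi,t)\,\phi(M)}\subseteq\B(H,K)$ is an imprimitivity bimodule between $\ca(\pi,t)$ and $\ca(\si,s)$ --- fullness on the left uses $\phi(M)\phi(M)^*\subseteq\pi(A)$ and that $\pi(A)$ contains a c.a.i.\ of $\ca(\pi,t)$ (which holds since $E$ is non-degenerate), fullness on the right the analogous statement for $\si(B)\subseteq\ca(\si,s)$ (which holds since $F$ is non-degenerate) --- so that $\ca(\si,s)\cong\K({}_{\ca(\pi,t)}\mathfrak D_{12})$. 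On the dense submodule $\ca(\pi,t)\phi(M)$ set $\de_z(T\phi(m)):=\be_z(T)\phi(m)$. This is well defined: if $\sum_iT_i\phi(m_i)=0$, then right-multiplying by $\phi(p)^*$ gives $\sum_iT_i\pi(m_ip^*)=0$ in $\ca(\pi,t)$ for every $p\in M$, whence $\sum_i\be_z(T_i)\pi(m_ip^*)=0$ after applying $\be_z$, and since $\ncl{\phi(M)^*K}=H$ we conclude $\sum_i\be_z(T_i)\phi(m_i)=0$. Moreover $(\de_zy)(\de_zy)^*=\be_z(yy^*)$, so $\de_z$ is a completely isometric bijection of $\mathfrak D_{12}$ with ${}_{\ca(\pi,t)}\sca{\de_zy_1,\de_zy_2}=\be_z\big({}_{\ca(\pi,t)}\sca{y_1,y_2}\big)$ and inverse $\de_{\bar z}$; that is, $(\be_z,\de_z)$ is an equivariance datum for the bimodule $\mathfrak D_{12}$. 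Consequently $\de_z$ induces a $*$-automorphism $\ga_z$ of $\ca(\si,s)=\K({}_{\ca(\pi,t)}\mathfrak D_{12})$, determined by $\de_z(yw)=\de_z(y)\ga_z(w)$ and given concretely by $\ga_z(\phi(m)^*W\phi(n))=\phi(m)^*\be_z(W)\phi(n)$ for $W\in\ca(\pi,t)$; from this formula $\ga_z(\si(b))=\si(b)$ and $\ga_z(s(f))=z\,s(f)$ follow on dense spanning sets. Point-norm continuity of $z\mapsto\ga_z$ is then automatic by the remark following \eqref{eq:ga}. The delicate point here is precisely the passage from $\be$ to $\de_z$: one must show that the formula makes sense and is completely isometric using only that $\be_z$ is isometric on $\ca(\pi,t)$, and the ``sandwich'' device with $\phi(M)$ and $\phi(M)^*$ --- the same device underlying the decomposition identities above --- is what makes this work.
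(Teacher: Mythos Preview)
Your construction of $(\sigma, s, \phi)$ via Rieffel induction and the verification of the decomposition identities match the paper's argument (the paper runs the dual direction, building $(\pi,t)$ on $K=M\otimes_BH$ from a given $(\sigma,s)$, but this is cosmetic). The genuine difference is in transferring the gauge action. The paper works directly with the TRO approximate identity: choosing row contractions $n_\lambda=[n_1^\lambda,\dots,n_{k_\lambda}^\lambda]$ with $m\,n_\lambda^*n_\lambda\to m$, it sets $\gamma_z=\id\otimes\beta_z\otimes\id$ on $F\simeq M^*\otimes_AE\otimes_AM$, extends to monomials, and proves the extension is isometric by sandwiching through the completely contractive maps $\ad_{\phi(n_\lambda)^*}\colon\ca(\sigma,s)\to M_{k_\lambda}(\ca(\pi,t))$ and $\ad_{\phi(n_\lambda)}$ back. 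Your linking-bimodule route is more conceptual: once $\delta_z$ is a $\beta_z$-equivariant unitary of $\mathfrak D_{12}$, the automorphism of the right coefficient algebra drops out for free. One caution: your justification of right fullness --- that $\ncl{\mathfrak D_{12}^*\mathfrak D_{12}}$ \emph{equals} $\ca(\sigma,s)$ rather than merely contains it --- is not ``the analogous statement for $\sigma(B)$''; that symmetry would require $\ncl{\phi(M)^*\ca(\pi,t)}=\ncl{\ca(\sigma,s)\phi(M)^*}$, which in turn needs the iterated insertion of $\pi(A)=\ncl{\phi(M)\phi(M)^*}$ across all monomials $t(E)^kt(E)^{*l}$, precisely the computation the paper carries out separately in the proof of Theorem~\ref{T: sme}. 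This does not break your argument, since your explicit formula $\gamma_z(\phi(m)^*W\phi(n))=\phi(m)^*\beta_z(W)\phi(n)$ together with the check on $\sigma(B)$ and $s(F)$ already shows that $\gamma_z$ restricts to an automorphism of $\ca(\sigma,s)$; but the imprimitivity claim as stated needs that extra step.
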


\begin{proof}
To avoid technical notation we show the dual statement.
That is, given a non-degenerate (injective) representation $(\si, s)$ of ${}_B F_B$ acting on $H$, we will construct the required $(\pi, t)$ and $\phi$.
Let $K = M \otimes_B H$ and define the representation
\[
\phi \colon M \to \B(H, K) \text{ such that } \phi(x)\xi = x \otimes \xi .
\]
Then $\phi$ is a TRO representation of $M$.
Furthermore we have an induced non-degenerate (resp. injective) representation
\[
\pi \colon A \to \B(K) \text{ such that } \pi(a) x \otimes \xi = (ax) \otimes \xi.
\]
Since $E \simeq M \otimes_B F \otimes_B M^*$ we can define
\[
t \colon E \to \B(K) \text{ such that } t(m \otimes f \otimes n^*) x \otimes \xi = m \otimes (s(f) \si(n^*x) \xi).
\]
Existence of $t$ follows once we show that
\[
t(m_1 \otimes f_1 \otimes n_1^*)^*t(m_2 \otimes f_2 \otimes n_2^*) = \pi(\sca{m_1 \otimes f_1 \otimes n_1^*, m_2 \otimes f_2 \otimes n_2^*}.
\]
In this case $t$ will be norm-decreasing on finite sums of $m \otimes f \otimes n^* \in E$ and thus can be extended to the entire of $E$.
A straightforward computation reveals that
\begin{align*}
\sca{t(m_1 \otimes f_1 \otimes n_1^*)x_1 \otimes \xi_1, t(m_2 \otimes f_2 \otimes n_2^*) x_2 \otimes \xi_2}_K & = \\
& \hspace{-5cm} =
\sca{m_1 \otimes (s(f_1) \si(n_1^*x_1) \xi_1), m_2 \otimes (s(f_2) \si(n_2^*x_2) \xi_2)}_K \\
& \hspace{-5cm} =
\sca{s(f_1) \si(n_1^*x_1) \xi_1, \si(m_1^* m_2) s(f_2) \si(n_2^*x_2) \xi_2}_H \\
& \hspace{-5cm} =
\sca{\xi_1, \si(x_1^* n_1 \sca{f_1, m_1^* m_2 f_2} n_2^* x_2) \xi_2}_H
\end{align*}
and on the other hand we have that
\begin{align*}
\sca{x_1 \otimes \xi_1, \pi(\sca{m_1 \otimes f_1 \otimes n_1^*, m_2 \otimes f_2 \otimes n_2^*}) (x_2 \otimes \xi_2)}_K & = \\
& \hspace{-5cm} =
\sca{x_1 \otimes \xi_1, \pi(n_1 \sca{f_1, m_1^* m_2 f_2} n_2^*) (x_2 \otimes \xi_2)}_K \\
& \hspace{-5cm} =
\sca{x_1 \otimes \xi_1, (n_1 \sca{f_1, m_1^* m_2 f_2} n_2^* x_2) \otimes \xi_2}_K \\
& \hspace{-5cm} =
\sca{\xi_1, \si(x_1^* n_1 \sca{f_1, m_1^* m_2 f_2} n_2^* x_2) \xi_2}_H .
\end{align*}

By construction we have that $\phi(m) s(f) \phi(n)^* = t(m \otimes f \otimes n^*)$ and that $\phi(n)^* t(e) \phi(m) = s(n^* \otimes e \otimes m)$.
Consequently we obtain
\[
\ncl{\phi(M) s(F) \phi(M)^*} \subseteq \ncl{t(E)}
\]
and
\[
\ncl{\phi(M)^* t(E) \phi(M)} \subseteq \ncl{s(F)}.
\]
Since $A$ and $B$ act non-degenerately we obtain equalities.

To avoid technical notation we will show the second item just when $\pi$ is injective (and thus $t$ is isometric).
For the general case substitute $E$ by $\ncl{t(E)}$ and $F$ by $\ncl{s(F)}$ in what follows.
We will also make the simplifications
\[
t(E) = E \subseteq \fL(E) \subseteq \ca(\pi,t).
\]
Suppose that $(\pi,t)$ admits a gauge action $\{\be_z\}$.
Then every $\be_z$ induces a representation $(\be_z|_A, \be_z|_E)$ on $E$.
Since $F \simeq M^* \otimes_A E \otimes_A M$ we can define the mapping
\[
\ga_z = \id_{M^*} \otimes \be_z \otimes \id_{M} \colon F \to F 
\]
where for simplicity we don't write the unitary of the equivalence.
Then $(\id_B, \ga_z)$ induces an injective representation of $F$.
We have to show that it induces a gauge action on the entire $\ca(\si, s)$.
For the TRO $M$ fix the net $n_\la^* = [(n_1^\la)^*, \dots, (n_{k_\la}^\la)^*]$ such that
\[
\lim_\la m n_\la^* n_\la = m \foral m \in M.
\]
Recall that every $n_\la$ is a contraction.
Let the completely contractive maps
\[
\phi_\la \colon \ca(\si, s) \to M_{k_\la}(\ca(\pi,t)) \text{ such that } \phi_\la = \ad_{\phi(n_\la)^*}
\]
and
\[
\psi_\la \colon M_{k_\la}(\ca(\pi,t)) \to \ca(\si, s) \text{ such that } \psi_\la = \ad_{\phi(n_\la)}.
\]
Then for every $f \in \ca(\si,s)$ of the form
\[
f = \phi(m_1)^* t(\xi_1) \dots t(\xi_{k}) t(\eta_{l})^* \dots t(\eta_1)^* \phi(m_2)
\]
with $\xi_i, \eta_j \in E$ and $m_1, m_2 \in M$ we get that $\lim_\la \psi_\la \phi_\la(f) = f$.
By iterating we can extend $\ga_z$ to be defined on all elements $f$ of this form and so that
\[
\phi_\la \ga_z(f) = (\be_z \otimes \id_{k_\la}) \phi_\la(f).
\]
Therefore we obtain
\begin{align*}
\nor{\ga_z(f)}
& =
\lim_\la \nor{\psi_\la \phi_\la \ga_z(f)} 
 \leq
\limsup_\la \nor{\phi_\la \ga_z(f)} \\
& =
\limsup_\la \nor{(\be_z \otimes \id_{k_\la}) \phi_\la(f)} 
 \leq 
\limsup_\la \nor{\phi_\la(f)} 
\leq \nor{f}.
\end{align*}
Applying for $\ga_{\ol{z}}$ gives $\nor{\ga_z(f)} = \nor{f}$.
Linearity allows to use the same arguments when we consider finite sums of elements of the form of $f$.
However such finite sums span a dense subspace of $\ca(\si,s)$ and thus $\ga_z$ extends to a $*$-isomorphism of $\ca(\si,s)$.
An $\eps/3$ argument shows that $\{\ga_z\}$ is in particular point-norm continuous and the proof is complete.
\end{proof}

This construction respects $J$-covariance of the representations.

%%%%%%%%%%%%%%%%%%%%%%%%%%%%%%%%
\begin{proposition}\label{P: same covariance}
Let ${}_A E_A$ and ${}_B F_B$ be strongly Morita equivalent by a TRO $M$.
Let $(\pi,t)$ be a non-degenerate representation of ${}_A E_A$ and let $(\si,s)$ be the non-degenerate representation constructed in Proposition \ref{P: same repn}.
If $\ca(\pi,t) \simeq \O(J, E)$ for $J \subseteq J_E$ then $M^* J M \subseteq J_F$ and $\ca(\si,s) \simeq \O(M^* J M, F)$.
\end{proposition}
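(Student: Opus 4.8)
The plan is to apply the Gauge-Invariant-Uniqueness-Theorem (in the form of Kakariadis recalled above) twice. From the hypothesis $\ca(\pi,t) \simeq \O(J,E)$ with $J \subseteq J_E$, the GIUT gives that $(\pi,t)$ admits a gauge action, that $\pi$ is injective, and that $J = \{a \in A \mid \pi(a) \in \psi_t(\K(E))\}$. By Proposition \ref{P: same repn} the representation $(\si,s)$ is then injective and admits a gauge action, so the GIUT applies to it as well and yields $\ca(\si,s) \simeq \O(J',F)$ with $J' := \{b \in B \mid \si(b) \in \psi_s(\K(F))\}$; moreover $J' \subseteq J_F$ comes for free from the crucial remark of Katsura recalled above. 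So the whole statement reduces to the single ideal identity $J' = \ncl{M^*JM}$: once this is known, $\ncl{M^*JM} = J' \subseteq J_F$ and $\ca(\si,s) \simeq \O(\ncl{M^*JM},F)$ follow immediately.

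For that identity I would first collect the ``linking dictionary'' coming out of the construction in Proposition \ref{P: same repn}: for $m, n \in M$, $a \in A$ and $b \in B$ one has $\phi(n)^*\phi(m) = \si(n^*m)$, $\phi(m)\phi(n)^* = \pi(mn^*)$, $\pi(a)\phi(m) = \phi(am)$ and $\phi(m)\si(b) = \phi(mb)$, as well as $\phi(m)s(f)\phi(n)^* = t(m \otimes f \otimes n^*)$ and $\phi(n)^* t(e) \phi(m) = s(n^* \otimes e \otimes m)$ under the identifications $E \simeq M \otimes_B F \otimes_B M^*$ and $F \simeq M^* \otimes_A E \otimes_A M$. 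In particular $\ncl{\phi(M)\phi(M)^*} = \pi(A)$, $\ncl{\phi(M)^*\phi(M)} = \si(B)$ and $\si(n^*am) = \phi(n)^*\pi(a)\phi(m)$. Expanding $s(f_1)s(f_2)^*$ on elementary tensors $f_i = n_i^* \otimes e_i \otimes m_i$ and simplifying via $\phi(m_1)\phi(m_2)^* = \pi(m_1 m_2^*)$ and $t(e)\pi(a) = t(e \cdot a)$ produces $s(f_1)s(f_2)^* = \phi(n_1)^* \psi_t(\Theta_{e_1 \cdot (m_1 m_2^*),\, e_2}) \phi(n_2)$; since $\{ \Theta_{e \cdot a,\, e'} \mid e, e' \in E,\ a \in A \}$ has dense linear span in $\K(E)$, passing to closed spans yields
\[
\psi_s(\K(F)) = \ncl{\phi(M)^* \psi_t(\K(E)) \phi(M)}.
\]

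The inclusion $\ncl{M^*JM} \subseteq J'$ is then easy: if $b = \lim \sum_i n_i^* a_i m_i$ with $a_i \in J$, then $J$-covariance gives $\pi(a_i) = \psi_t(\phi_E(a_i)) \in \psi_t(\K(E))$, so $\si(n_i^* a_i m_i) = \phi(n_i)^*\pi(a_i)\phi(m_i) \in \ncl{\phi(M)^*\psi_t(\K(E))\phi(M)} = \psi_s(\K(F))$ and hence $\si(b) \in \psi_s(\K(F))$. For the reverse inclusion, let $b \in J'$, so $\si(b) \in \ncl{\phi(M)^*\psi_t(\K(E))\phi(M)}$; then for all $m, m' \in M$,
\[
\phi(m)\si(b)\phi(m')^* \in \ncl{\phi(M)\phi(M)^*\, \psi_t(\K(E))\, \phi(M)\phi(M)^*} \subseteq \ncl{\pi(A)\, \psi_t(\K(E))\, \pi(A)} \subseteq \psi_t(\K(E)),
\]
the last step because $\pi(a)\psi_t(\Theta_{e,e'}) = \psi_t(\Theta_{\phi_E(a)e,\, e'})$ together with its adjoint shows that $\psi_t(\K(E))$ absorbs $\pi(A)$ on both sides. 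On the other hand the dictionary gives $\phi(m)\si(b)\phi(m')^* = \pi(mbm'^*)$, so $\pi(mbm'^*) \in \pi(A) \cap \psi_t(\K(E)) = \pi(J)$, whence $mbm'^* \in J$ by injectivity of $\pi$. Writing $b = \lim_{\la,\mu}\sum_{i,j}(n_i^\la)^*\,(n_i^\la\, b\, (n_j^\mu)^*)\, n_j^\mu$ with $b_\la = \sum_i (n_i^\la)^* n_i^\la$ the row-contraction approximate identity of $B$ supplied by $M$, and noting $n_i^\la\, b\, (n_j^\mu)^* \in J$, we conclude $b \in \ncl{M^*JM}$, completing the proof.

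The steps carrying the content are the identity $\psi_s(\K(F)) = \ncl{\phi(M)^*\psi_t(\K(E))\phi(M)}$ and, in the nontrivial inclusion $J' \subseteq \ncl{M^*JM}$, the device of conjugating $\si(b)$ by $\phi(m)$ and $\phi(m')^*$ to bring it back inside $\pi(A)$, where the covariance characterization $\pi(A) \cap \psi_t(\K(E)) = \pi(J)$ becomes available; everything else is bookkeeping with the two TRO identifications and with the approximate identities attached to $M$. I do not foresee a genuine obstacle beyond organizing these computations cleanly.
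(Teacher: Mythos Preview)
Your proposal is correct and follows essentially the same route as the paper: both hinge on the identity $\psi_s(\K(F)) = \ncl{\phi(M)^*\psi_t(\K(E))\phi(M)}$ and on the device of conjugating $\si(b)$ by $\phi(m),\phi(m')^*$ to land in $\pi(A)\cap\psi_t(\K(E))$. The only organizational difference is that the paper first proves the special case $M^*J_EM=J_F$ (for Katsura's ideal) and then bootstraps to arbitrary $J\subseteq J_E$, whereas you work directly with general $J$ and obtain $J'\subseteq J_F$ for free from Katsura's remark; this is a mild streamlining, not a different argument.
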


\begin{proof}
First we show that $M^* J_E M = J_F$.
One consequence of the Gauge-Invariant-Uniqueness-Theorem, as presented in \cite{Kak14}, is that
\[
J_E = \{a \in A \mid \pi(a) \in \psi_t(\K(E))\}
\]
for any non-degenerate injective covariant $(\pi,t)$ that admits a gauge action.
Fix such a $(\pi,t)$, so that $\O_E = \ca(\pi,t)$.
Let $(\si,s)$ be as in Proposition \ref{P: same repn}.
By construction we get that
\begin{align*}
\ncl{\phi(M)^* t(E) t(E)^* \phi(M)} 
& =
\ncl{\phi(M)^* t(E) \pi(A) t(E)^* \phi(M)} \\
& =
\ncl{\phi(M)^* t(E) \phi(M) \phi(M)^* t(E)^* \phi(M)} \\
& = 
\ncl{s(F) s(F)^*}.
\end{align*}
As $t(E)t(E)^*$ is dense in $\psi_t(\K(E))$, and likewise for $F$, we get that
\[
\psi_s(\K(F)) = \ncl{\phi(M)^* \psi_t(\K(E)) \phi(M)}.
\]
In a similar way we obtain the dual
\[
\psi_t(\K(E)) = \ncl{\phi(M) \psi_s(\K(F)) \phi(M)^*}.
\]
For $b = m^* a n \in M^* J_E M$ we then get
\[
M b M^* \in \ncl{MM^* J_E MM^*} = J_E
\]
as $\ncl{MM^*} = A$.
Therefore we have
\[
\phi(M) \si(b) \phi(M)^* = \pi(M b M^*)
\subseteq 
\psi_t(\K(E)) = \ncl{\phi(M) \psi_s(\K(F)) \phi(M)^*}.
\]
Consequently we derive
\[
\si(B b B) = \ncl{\phi(M)^* \phi(M) \si(b) \phi(M)^* \phi(M)} \subseteq \psi_s(\K(F)),
\]
and hence $\si(b) \in \psi_s(\K(F))$.
Since $\si$ is injective we then automatically get that $b \in J_F$; thus $M J_E M^* \subseteq J_F$.
In a dual way we obtain $M^* J_F M \subseteq J_E$.
Combining those gives the required equality.
Now for $b \in J_F$ we have $M b M^* \in J_E$ and thus $\pi(M b M^*) \in \psi_t(\K(E))$.
Following the same arguments as above we obtain that $\si(b) \in \psi_s(\K(F))$ and therefore $(\si, s)$ is covariant.
 
Once we deal with $J_E$ we can run the same arguments to complete the proof.
This follows by a consequence of the Gauge-Invariant-Uniqueness-Theorem, as presented in \cite{Kak14}, i.e. if $J \subseteq J_E$ then
\[
J = \{a \in A \mid \pi(a) \in \psi_t(\K(E))\}
\]
for any non-degenerate injective $J$-covariant $(\pi,t)$ that admits a gauge action.
The key is to notice that if $J \subseteq J_E$ then $M^* J M \subseteq M^* J_E M = J_F$ from the first part.
Now the arguments follow mutatis mutandis.
\end{proof}

The following corollary gives a necessary and sufficient condition for strong Morita equivalence in the spirit of strong $\Delta$-equivalence \cite{Ele14}.
Recall that if ${}_A X_B$ is an operator bimodule then a \emph{faithful CES-representation} is a triple $(\pi, t, \si)$ of completely contractive maps $\pi \colon A \to \B(H)$, $t \colon X \to \B(K,H)$ and $\si \colon B \to \B(K)$ where $t$ is completely isometric \cite[Section 3.3]{BleLeM04}.

%%%%%%%%%%%%%%%%%%%%%%%%%%%%%%%%
\begin{corollary}\label{C: ns cond sme}
Let ${}_A E_A$ and ${}_B F_B$ be non-degenerate C*-correspondences.
Then ${}_A E_A$ and ${}_B F_B$ are strongly Morita equivalent if and only if there is
\begin{enumerate}
\item a non-degenerate faithful CES-representation $(\pi, t, \pi)$ of ${}_A E_A$ on a Hilbert space $K$;
\item a non-degenerate faithful CES-representation $(\si, s, \si)$ of ${}_B F_B$ on a Hilbert space $H$; and 
\item a TRO $M \subseteq \B(H, K)$ such that
\[
t(E) = \ncl{M s(F) M^*} \qand s(F) = \ncl{M^* t(E) M},
\]
and
\[
\pi(A) = \ncl{M \si(B) M^*} \qand \si(B) = \ncl{M^* \pi(A) M},
\]
(where possibly $\ncl{MM^*} \neq \pi(A)$ or $\ncl{M^*M} \neq \si(B)$).
\end{enumerate}
\end{corollary}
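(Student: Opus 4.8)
\emph{Proof proposal.}
The forward implication should fall out of Proposition \ref{P: same repn}. The plan is to fix any non-degenerate injective (Toeplitz) representation $(\pi,t)$ of ${}_AE_A$ on a Hilbert space $K$ (for instance the Fock representation, which is injective, and non-degenerate since $E$ is), feed it into Proposition \ref{P: same repn} to obtain a non-degenerate injective representation $(\si,s)$ of ${}_BF_B$ on some $H$ and a TRO representation $\phi$ of $M$ on $\B(H,K)$ with $\ncl{t(E)}=\ncl{\phi(M)s(F)\phi(M)^*}$ and $\ncl{s(F)}=\ncl{\phi(M)^*t(E)\phi(M)}$, and then to observe that injectivity of $\pi$ and $\si$ makes $t$ and $s$ complete isometries, so that $(\pi,t,\pi)$ and $(\si,s,\si)$ are non-degenerate faithful CES-representations and the ranges above are already closed. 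What is left is the C*-level half of (iii): running the same computation that defines $t$ in the proof of Proposition \ref{P: same repn} (with $K=M\otimes_BH$, $\phi(m)\xi=m\otimes\xi$ and $\pi(a)(x\otimes\xi)=ax\otimes\xi$) gives $\phi(m)\si(b)\phi(n)^*=\pi(mbn^*)$ and $\phi(n)^*\pi(a)\phi(m)=\si(n^*am)$, whence $\pi(A)=\ncl{\phi(M)\si(B)\phi(M)^*}$ and $\si(B)=\ncl{\phi(M)^*\pi(A)\phi(M)}$ because $\ncl{MBM^*}=\ncl{MM^*}=A$ and $\ncl{M^*AM}=\ncl{M^*M}=B$. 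The TRO in (iii) is then $\phi(M)$.

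For the converse, assume (i)--(iii). First I would manufacture the Morita TRO by \emph{shrinking} $M$: set
\[
N := \ncl{\pi(A)\, M\, \si(B)} \subseteq \B(H,K).
\]
Using only the relations in (iii) and the fact that $\pi(A)$, $\si(B)$ are C*-algebras one gets $\ncl{NN^*}=\ncl{\pi(A)\,(M\si(B)M^*)\,\pi(A)}=\pi(A)$ and, symmetrically, $\ncl{N^*N}=\si(B)$; hence $\ncl{NN^*N}=\ncl{\pi(A)N}=N$, so $N$ is a TRO. Since the CES-representations are faithful, $\pi\colon A\to\pi(A)$ and $\si\colon B\to\si(B)$ are $*$-isomorphisms, so $N$ is a concrete copy of an imprimitivity bimodule between $A$ and $B$, and $N\otimes_BF\otimes_BN^*$ is a genuine C*-correspondence over $A$.

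The heart of the matter is to identify $E\simeq N\otimes_BF\otimes_BN^*$ as C*-correspondences over $A$, and I would do this through a chain of completely isometric operator $A$-bimodule maps. Using Blecher \cite[Theorem 4.3]{Ble97} (legitimate because $F$ and $N^*$ are non-degenerate C*-correspondences) I would first replace the Hilbert-module tensor products by Haagerup tensor products; then, using the faithful CES-representation $(\si,s,\si)$ and functoriality of $\otimes^h$, transport the middle factor to its concrete copy, reducing to $N\otimes_{\si(B)}^hs(F)\otimes_{\si(B)}^hN^*$; then collapse this with Lemma \ref{L: H-technique}, applied twice (once in its left-handed form, using $\ncl{N^*N}=\si(B)$), to $\ncl{N\,s(F)\,N^*}$. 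Finally
\[
\ncl{N\,s(F)\,N^*}=\ncl{\pi(A)\,M\,s(F)\,M^*\,\pi(A)}=\ncl{\pi(A)\,t(E)\,\pi(A)}=t(E),
\]
where the three equalities use non-degeneracy of $F$, the relation $t(E)=\ncl{Ms(F)M^*}$ from (iii), and non-degeneracy of $E$; and $t(E)\simeq E$ as operator $A$-bimodules because $t$ is a completely isometric $A$-bimodule map. Composing everything, $N\otimes_BF\otimes_BN^*$ and $E$ are completely isometrically isomorphic as operator $A$-bimodules, and Proposition \ref{P: uneq=cc} then upgrades this to a unitary equivalence of C*-correspondences, so $E\sme F$.

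I expect the converse to be the main obstacle. The delicate point is that a faithful CES-representation need not be a Toeplitz representation, so $t(E)$ and $s(F)$ carry no obvious concrete C*-correspondence structure; the whole tensor-product computation has to be run at the level of abstract operator bimodules (and the abstract imprimitivity bimodule underlying $N$), with Blecher's theorem used to slide between the two kinds of tensor product inside the triple product, and Proposition \ref{P: uneq=cc} invoked only at the end to return to the language of C*-correspondences. Checking that each isomorphism in the chain --- in particular those coming from Lemma \ref{L: H-technique} --- is an $A$-bimodule map, so that Proposition \ref{P: uneq=cc} applies, is where the real bookkeeping lies; by contrast, the forward direction is essentially immediate once Proposition \ref{P: same repn} is in hand.
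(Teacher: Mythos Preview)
Your proposal is correct and follows essentially the same route as the paper. For the forward direction the paper starts from the linking-algebra representation of $E$ rather than the Fock representation, but either choice yields a non-degenerate faithful CES-representation, and the paper likewise appeals to Proposition~\ref{P: same repn}; your added remark that the relations $\pi(A)=\ncl{\phi(M)\si(B)\phi(M)^*}$ and $\si(B)=\ncl{\phi(M)^*\pi(A)\phi(M)}$ also come out of that construction is exactly what the paper leaves implicit in ``an application of Proposition~\ref{P: same repn} finishes this direction.'' For the converse the paper does precisely what you outline: replace $M$ by $\ncl{\pi(A)M\si(B)}$ to obtain an $A$--$B$ imprimitivity bimodule, invoke Lemma~\ref{L: H-technique} to get $E\simeq M\otimes_B^hF\otimes_B^hM^*$, pass from Haagerup to interior tensor products via \cite[Theorem~4.3]{Ble97}, and then apply Proposition~\ref{P: uneq=cc} to upgrade the operator-bimodule isomorphism to a unitary equivalence.
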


\begin{proof}
For the forward implication suppose that $E \sme F$. 
Let $\fL(E)$ act non-degenerate\-ly and faithfully on some $K$.
Then there is an induced non-degenerate representation $(\pi, t)$ of the C*-correspondence $E$ which trivially is a faithful CES-representation.
An application of Proposition \ref{P: same repn} finishes this direction.

Conversely, notice that by substituting $M$ with $\ncl{\pi(A)M\si(B)}$ we get an $A$-$B$-imprimitivity bimodule for which the relations continue to hold.
Therefore without loss of generality we may assume that $M$ is an $A$-$B$-imprimitivi\-ty bimodule.
By Lemma \ref{L: H-technique} we then obtain
\[
E \simeq M \otimes_B^h F \otimes_B^h M^* \qand F \simeq M^* \otimes_A^h E \otimes_A^h M
\]
as operator spaces.
However the Haagerup tensor product coincides with the interior tensor product by \cite[Theorem 4.3]{Ble97}.
Therefore we get that
\[
E \simeq M \otimes_B F \otimes_B M^* \qand F \simeq M^* \otimes_A E \otimes_A M
\]
as operator spaces.
Notice that the isomorphisms are completely isometric isomorphisms in the operator modules category.
Therefore Proposition \ref{P: uneq=cc} applies to give that the isomorphisms induce unitary equivalences, and the proof is complete.
\end{proof}

%%%%%%%%%%%%%%%%%%%%%%%%%%%%%%%%
\begin{theorem}\label{T: sme}
Let ${}_A E_A$ and ${}_B F_B$ be strongly Morita equivalent C*-corre\-spondences by a TRO $M$. 
Then:
\begin{enumerate}
\item $\T_E \sme \T_F$.
\item $\O(J, E) \sme \O(M^*JM, F)$ for every $J \subseteq J_E$.
\item $\O_E \sme \O_F$.
\item $\T_E^+ \dee \T_F^+$ in the sense of Eleftherakis \cite{Ele14}.
\item $\T_E^+ \sme \T_F^+$ in the sense of Blecher-Muhly-Paulsen \cite{BMP00}.
\end{enumerate}
\end{theorem}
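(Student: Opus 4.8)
\emph{Proof sketch.} The plan is to extract all five items from a single construction, applied to the right representation in each case. First I would fix $J\subseteq J_E$ and an injective, non-degenerate representation $(\pi,t)$ of ${}_AE_A$ that admits a gauge action and with $\ca(\pi,t)\simeq\O(J,E)$ --- for instance the one coming from a faithful representation of $\O(J,E)$; with $J=0$ this realises $\T_E$, with $J=J_E$ it realises $\O_E$, and in all cases it contains $\T_E^+$ completely isometrically and non-degenerately. Feeding $(\pi,t)$ into Proposition \ref{P: same repn} produces a non-degenerate injective $(\si,s)$ of $F$ that again admits a gauge action, together with a TRO representation $\phi$ of $M$ satisfying
\[
\ncl{t(E)}=\ncl{\phi(M)\,s(F)\,\phi(M)^*}\qquad\text{and}\qquad\ncl{s(F)}=\ncl{\phi(M)^*\,t(E)\,\phi(M)}.
\]
Proposition \ref{P: same covariance} then identifies $\ca(\si,s)\simeq\O(M^*JM,F)$, and since $M$ is an $A$--$B$ imprimitivity bimodule on which $\phi$ is faithful one also has $\ncl{\phi(M)\phi(M)^*}=\pi(A)$, $\ncl{\phi(M)^*\phi(M)}=\si(B)$, and hence $\ncl{\phi(M)\si(B)\phi(M)^*}=\pi(A)$.

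The technical core is an induction on tensor powers. Using $\ncl{\phi(M)\si(B)\phi(M)^*}=\pi(A)$, the identity $\ncl{t(E)}=\ncl{\phi(M)s(F)\phi(M)^*}$, and the absorptions $\ncl{\phi(M)^*\phi(M)}=\si(B)$ and $\ncl{s(F)\si(B)}=\ncl{s(F)}$, I would check that conjugation by $\phi(M)$ carries the closed span of the length-$n$ words $s(f_1)\cdots s(f_n)$ onto that of $t(e_1)\cdots t(e_n)$, and, inserting copies of $\si(B)=\ncl{\phi(M)^*\phi(M)}$ between letters, that the same holds for every monomial of $\ca(\si,s)$ in $s(F)$, $s(F)^*$, $\si(B)$. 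Passing to closed linear spans would then give
\[
\ncl{\phi(M)\,\O(M^*JM,F)\,\phi(M)^*}=\O(J,E),\qquad \ncl{\phi(M)\,\T_F^+\,\phi(M)^*}=\T_E^+,
\]
together with the symmetric equalities obtained by swapping $\phi(M)$ and $\phi(M)^*$. Since $\phi(M)$ is a TRO and the representations of $\O(J,E)$, $\O(M^*JM,F)$ (resp.\ of $\T_E^+$, $\T_F^+$) in play are non-degenerate and completely isometric, these identities say precisely $\O(J,E)\dee\O(M^*JM,F)$ and $\T_E^+\dee\T_F^+$. Specialising $J=0$ yields (1), a general $J\subseteq J_E$ yields (2), and $J=J_E$ with $M^*J_EM=J_F$ from Proposition \ref{P: same covariance} yields (3); for (1)--(3) I would finish by recalling that $\dee$ coincides with $\sme$ on C*-algebras. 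Item (4) is the $\T^+$-statement just obtained.

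For (5) I would note that $\T_E^+$ and $\T_F^+$ are approximately unital (each contains the non-degenerate C*-subalgebra $\pi(A)$, resp.\ $\si(B)$, whose c.a.i.\ is a c.a.i.\ for the tensor algebra), and that on approximately unital operator algebras $\dee$ implies $\sme$, so (5) follows from (4). Alternatively one builds the Blecher--Muhly--Paulsen context directly from $\phi(M)$: with $\M:=\ncl{\T_E^+\phi(M)}=\ncl{\phi(M)\T_F^+}$ and $\N:=\M^*$ one has the bimodules ${}_{\T_E^+}\M_{\T_F^+}$ and ${}_{\T_F^+}\N_{\T_E^+}$, property (A) is associativity of multiplication, $\ncl{\M\N}=\T_E^+$ and $\ncl{\N\M}=\T_F^+$, and (G), (P) drop out of Lemma \ref{L: H-technique}, which identifies $\M\simeq\phi(M)\otimes^h_{\si(B)}\T_F^+$ and $\N\simeq\T_F^+\otimes^h_{\si(B)}\phi(M)^*$, whence $\M\otimes^h_{\T_F^+}\N\simeq\T_E^+$ via multiplication, and dually. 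The only genuine obstacle here is already settled in Propositions \ref{P: same repn} and \ref{P: same covariance} --- matching the universal algebras on both sides and keeping the gauge action alive --- which I am allowed to invoke; what remains is bookkeeping with closed spans of TRO products, the one delicate point being the complete-quotient conditions (G) and (P), which is why routing (5) through the known implication $\dee\Rightarrow\sme$ is cleanest.
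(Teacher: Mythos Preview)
Your main argument is correct and essentially identical to the paper's: both fix an injective gauge-invariant $(\pi,t)$ with $\ca(\pi,t)\simeq\O(J,E)$, invoke Propositions \ref{P: same repn} and \ref{P: same covariance}, propagate the TRO relations to tensor powers by inserting $\pi(A)=\ncl{\phi(M)\phi(M)^*}$ between consecutive letters, read off the $\Delta$-equivalence of the generated C*-algebras and of the tensor algebras, and finish (v) via the implication $\dee\Rightarrow\sme$ for approximately unital operator algebras from \cite{Ele14}.

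One caveat about your alternative route to (v): taking $\N=\M^*$ does not work for nonselfadjoint algebras. With $\M=\ncl{\T_E^+\phi(M)}$ one gets $\M^*=\ncl{\phi(M)^*(\T_E^+)^*}$, which is not a left $\T_F^+$-module; moreover $\ncl{\M\M^*}=\ncl{\T_E^+\pi(A)(\T_E^+)^*}=\ncl{\T_E^+(\T_E^+)^*}$ is selfadjoint, so it cannot equal $\T_E^+$ unless the latter is already a C*-algebra. The correct companion bimodule would be $\N=\ncl{\phi(M)^*\T_E^+}=\ncl{\T_F^+\phi(M)^*}$, which is \emph{not} the adjoint of $\M$; with that choice one does get $\ncl{\M\N}=\T_E^+$ and $\ncl{\N\M}=\T_F^+$, but verifying (G) and (P) still requires care. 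You were right to prefer routing (v) through $\dee\Rightarrow\sme$, which is exactly what the paper does.
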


\begin{proof}
For items (i)-(iii) it suffices to show item (ii).
Fix a non-degenerate injective representation $(\pi,t)$ of ${}_A E_A$ such that $\ca(\pi, t) \simeq \O(J, E)$.
By the Gauge-Invariant-Uniqueness-Theorem then $(\pi,t)$ is an injective $J$-covariant representation that admits a gauge action.
By Proposition \ref{P: same repn} we get an injective representation $(\si, s)$ for ${}_B F_B$ that admits a gauge action.
Proposition \ref{P: same covariance} implies that $(\si,s)$ is $M^* JM$-covariant and therefore $\O(M^* J M, F) \simeq \ca(\si,s)$ by the Gauge-Invariant-Uniqueness-Theorem.
A direct computation gives that
\begin{align*}
\ncl{\phi(M)^* t(E^{\otimes 2}) \phi(M)} 
& = 
\ncl{\phi(M)^*t(E) \pi(A) t(E) \phi(M)} \\
& = 
\ncl{\phi(M)^*t(E) \phi(M) \phi(M)^* t(E) \phi(M)} \\
& = 
s(F^{\otimes 2}).
\end{align*}
Similarly we have this for all tensor powers and their adjoints.
Since $\ca(\si,s)$ is generated by $s(F^{\otimes k}) s(F^{\otimes l})^*$ for $k, l \in \bZ_+$ we get that
\[
\ca(\si, s) = \ncl{\phi(M)^* \ca(\pi,t) \phi(M)}.
\]
Likewise we get the dual $\ca(\pi, t) = \ncl{\phi(M)^* \ca(\si,s) \phi(M)}$.
Therefore we deduce that $\O(J, E) \sme \O(M^* J M, F)$.

Recall that $\T_E^+ \simeq \ol{\alg}\{ \pi(A), t(E) \}$ and that $\T_F^+ \simeq \ol{\alg}\{ \si(B), s(F) \}$ since $J \subseteq J_E$ and $M^* J M \subseteq J_F$.
Notice that the algebraic relations above imply also that
\[
\ol{\alg}\{ \si(B), s(F) \} = \ncl{\phi(M)^* \ol{\alg}\{ \pi(A), t(E) \} \phi(M)}
\]
as well as the dual relation.
Therefore we obtain $\T_E^+ \dee \T_F^+$ as operator algebras.
By \cite{Ele14} we then get that $\T_E^+ \sme \T_F^+$ since these algebras attain approximate units.
\end{proof}

%%%%%%%%%%%%%%%%%%%%%%%%%%%%%%%%
\begin{remark}
An alternative proof of item (iii) of Theorem \ref{T: sme} can be given by using the C*-envelope.
Item (i) implies item (iv).
In particular we have that $\T_E^+ \dee \T_F^+$ in the category of operator spaces \cite{EleKak16}.
Therefore \cite[Theorem 5.10]{EleKak16} implies that $\T_E^+$ and $\T_F^+$ have strongly $\Delta$-equivalent TRO envelopes.
However TRO envelopes for operator algebras coincide with their C*-envelopes.
Since strong $\Delta$-equivalence for C*-algebras is Rieffel's strong Morita equivalence \cite[Corollary 5.2]{EleKak16}, then \cite{KatKri06} implies that $\O_E \sme \O_F$. 
\end{remark}

%%%%%%%%%%%%%%%%%%%%%%%%%%%%%%%%
\section{Applications}
%%%%%%%%%%%%%%%%%%%%%%%%%%%%%%%%

%%%%%%%%%%%%%%%%%%%%%%%%%%%%%%%%
\subsection{Stable isomorphism}
%%%%%%%%%%%%%%%%%%%%%%%%%%%%%%%%

Theorem \ref{T: sme} implies that the operator algebras are equivalent by the same TRO that gives $E \sme F$.
By \cite[Theorem 4.6]{EleKak16} if two operator spaces $X$ and $Y$ are strongly $\Delta$-equivalent by a $\si$-TRO then they are stably isomorphic.
Hence we get the following corollary.

%%%%%%%%%%%%%%%%%%%%%%%%%%%%%%%%
\begin{corollary}\label{C: stable}
Let ${}_A E_A$ and ${}_B F_B$ be non-degenerate C*-correspon\-dences.
If they are strongly Morita equivalent by a $\si$-TRO, then their tensor algebras, Toeplitz-Pimsner and (relative) Cuntz-Pimsner algebras are stably isomorphic.
\end{corollary}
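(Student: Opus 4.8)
The plan is to combine Theorem \ref{T: sme} with the relationship between strong $\Delta$-equivalence by a $\si$-TRO and stable isomorphism, as recorded from \cite{EleKak16}. First I would observe that the whole point of the construction in Propositions \ref{P: same repn} and \ref{P: same covariance} (and hence of Theorem \ref{T: sme}) is that \emph{one and the same} TRO $M$ that implements $E \sme F$ also implements all the equivalences of the associated operator algebras: the TRO representation $\phi$ of $M$ built in Proposition \ref{P: same repn} gives $\ca(\pi,t) = \ncl{\phi(M) \ca(\si,s) \phi(M)^*}$ and its dual, and similarly at the level of tensor algebras. Thus, if the TRO $M$ witnessing $E \sme F$ is a $\si$-TRO, then $\phi(M)$ is a $\si$-TRO as well, since the defining sequences $(a_t)$, $(b_\la)$ of approximate units for $\ncl{MM^*}$ and $\ncl{M^*M}$ transport through the TRO representation $\phi$ to sequences of the required form for $\ncl{\phi(M)\phi(M)^*}$ and $\ncl{\phi(M)^*\phi(M)}$.

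Next I would run Theorem \ref{T: sme}: for each of $\T_E$, $\O(J,E)$, $\O_E$ we get honest strong Morita equivalence (in the C*-sense, which is Rieffel's) implemented by $\phi(M)$, and for $\T_E^+$ we get $\T_E^+ \dee \T_F^+$ implemented by $\phi(M)$ in the category of operator algebras — and, as remarked after the theorem, in the category of operator spaces via \cite{EleKak16}. In every case the implementing TRO is a $\si$-TRO by the previous paragraph. Then I would invoke \cite[Theorem 4.6]{EleKak16}: if two operator spaces are strongly $\Delta$-equivalent by a $\si$-TRO, then they are stably isomorphic. Applying this to each pair $(\T_E, \T_F)$, $(\O(J,E),\O(M^*JM,F))$, $(\O_E,\O_F)$, $(\T_E^+,\T_F^+)$ yields the stated stable isomorphisms. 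For the C*-algebra cases one could alternatively cite the classical Brown–Green–Rieffel theorem \cite{BGR77} directly, since strong Morita equivalence of $\si$-unital C*-algebras is already known to coincide with stable isomorphism; but routing everything through \cite{EleKak16} keeps the argument uniform.

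The only point requiring a little care — and the main (minor) obstacle — is checking that the $\si$-TRO property is genuinely preserved under the concrete constructions, i.e. that $\phi(M)$ inherits $\si$-unitality of its generated C*-algebras from $M$. This is where one uses that $\phi$ is a TRO representation together with \cite[Lemma 2.3]{Bro77} and \cite[Theorem 6.1]{BMP00}: a countable approximate unit for $\ncl{MM^*}$ built from row-contractions over $M$ maps, under the $*$-homomorphism induced by $\phi$ on $\ncl{MM^*}$, to a countable approximate unit for $\ncl{\phi(M)\phi(M)^*}$ built from row-contractions over $\phi(M)$, and symmetrically on the other side; $\si$-unitality is then exactly the condition of \cite[Lemma 2.3]{Bro77}. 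Once this bookkeeping is in place, the corollary is an immediate assembly of Theorem \ref{T: sme} and \cite[Theorem 4.6]{EleKak16}, so the proof is short.
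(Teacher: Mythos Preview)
Your proposal is correct and follows essentially the same route as the paper: invoke Theorem \ref{T: sme} to see that all the associated operator algebras are $\Delta$-equivalent via (the image of) the same TRO $M$, then apply \cite[Theorem 4.6]{EleKak16} to pass from $\Delta$-equivalence by a $\si$-TRO to stable isomorphism. The paper's proof is just these two sentences; your extra care in verifying that $\phi(M)$ inherits the $\si$-TRO property from $M$ is a detail the paper leaves implicit.
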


We provide a necessary and sufficient condition for $\sme$ to be induced by a $\si$-TRO.
Recall that $\K_\infty(E)$ coincides with the spatial tensor product of the compact operators $\K$ with $E$.
Following Lance \cite[Chapter 4]{Lan95}, it becomes a right Hilbert module over $\K_\infty(A)$ with the inner product given by the formula
\[
\sca{ [e_{ij}], [f_{ij}] }_{K_\infty (E)} = \left[ \sum_k \sca{e_{ki}, f_{kj}}_E \right]
\]
and the obvious right action.
It further becomes a C*-correspondence over $\K_\infty(A)$ with the left action given by
\[
\phi _{K_\infty (E)} ( [a_{ij}] ) [e_{ij}] = \left[ \sum_k \phi _E(a_{ik}) e_{kj} \right].
\]

%%%%%%%%%%%%%%%%%%%%%%%%%%%%%%%%
\begin{proposition}\label{P: si-sme=stable}
Let ${}_A E_A$ and ${}_B F_B$ be non-degenerate C*-corresponden\-ces.
Then $E \sme F$ by a $\si$-TRO if and only if $\K_\infty(E) \simeq \K_\infty(F)$ as operator bimodules.
\end{proposition}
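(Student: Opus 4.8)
The plan is to derive both implications by transporting the problem through the stabilisation $X \mapsto \K_\infty(X)$ and then invoking the Brown--Green--Rieffel theorem \cite{BGR77} together with Proposition \ref{P: uneq=cc}. Apart from the fact that $\K_\infty$ of a $\si$-unital C*-algebra is again $\si$-unital, the one input needed about stabilisation is the natural isomorphism
\[
\K_\infty(X \otimes_C Y) \simeq \K_\infty(X) \otimes_{\K_\infty(C)} \K_\infty(Y),
\]
which one checks first for finite amplifications, $M_n(X)\otimes_{M_n(C)}M_n(Y)\simeq M_n(X\otimes_C Y)$, and then obtains by passing to the inductive limit.

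For the forward direction, suppose $E \sme F$ by a $\si$-TRO $M$. Arguing as in the converse part of Corollary \ref{C: ns cond sme}, I may assume that $M$ is an $A$--$B$-imprimitivity bimodule, so $A = \ncl{MM^*}$ and $B = \ncl{M^*M}$; since $M$ is a $\si$-TRO these are $\si$-unital, hence so are $\K_\infty(A)$ and $\K_\infty(B)$. Applying $\K_\infty$ to $E \simeq M \otimes_B F \otimes_B M^*$ and using the displayed identification yields
\[
\K_\infty(E) \simeq \K_\infty(M) \otimes_{\K_\infty(B)} \K_\infty(F) \otimes_{\K_\infty(B)} \K_\infty(M)^* .
\]
Now $\K_\infty(M)$ is an imprimitivity bimodule between the $\si$-unital C*-algebras $\K_\infty(A)$ and $\K_\infty(B)$, so by \cite{BGR77} this Morita equivalence is implemented by a $*$-isomorphism: there is a $*$-isomorphism $\theta \colon \K_\infty(A) \to \K_\infty(B)$ for which $\K_\infty(M)$ is unitarily equivalent, as an imprimitivity bimodule, to $\K_\infty(A)$ with the right $\K_\infty(B)$-action pulled back along $\theta$. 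Substituting this in, the two outer factors merely transport the C*-correspondence $\K_\infty(F)$ along $\theta$, and one concludes that $\K_\infty(E)$ and $\K_\infty(F)$ are unitarily equivalent as C*-correspondences; in particular they are completely isometrically isomorphic as operator bimodules.

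For the converse, assume $\K_\infty(E) \simeq \K_\infty(F)$ as operator bimodules. By Proposition \ref{P: uneq=cc}, applied to the C*-correspondences $\K_\infty(E)$ over $\K_\infty(A)$ and $\K_\infty(F)$ over $\K_\infty(B)$, this isomorphism is a unitary equivalence; write $\theta \colon \K_\infty(A) \to \K_\infty(B)$ for the accompanying $*$-isomorphism (extended to multiplier algebras). Fix a rank-one projection $e_{11} \in \K$, put $p := e_{11}\otimes 1 \in \M(\K_\infty(A))$ and $q := \theta^{-1}(e_{11}\otimes 1) \in \M(\K_\infty(A))$; then $p\,\K_\infty(A)\,p \simeq A$ with the corresponding corner of $\K_\infty(E)$ being, completely isometrically as a C*-correspondence, $E$, while $q\,\K_\infty(A)\,q \simeq B$ through $\theta$ with the corresponding corner of $\K_\infty(E) \simeq \K_\infty(F)$ being $F$. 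Both $p$ and $q$ are full, so $M := p\,\K_\infty(A)\,q$ is a TRO with $\ncl{MM^*} = p\,\K_\infty(A)\,p \simeq A$ and $\ncl{M^*M} = q\,\K_\infty(A)\,q \simeq B$, both $\si$-unital; hence $M$ is a $\si$-TRO. Since $M$ is exactly the standard imprimitivity bimodule implementing the passage from the $q$-corner to the $p$-corner, and the left $\K_\infty(A)$-action on $\K_\infty(E)$ restricts compatibly to these corners, tensoring by $M$ on the left and $M^*$ on the right gives $M \otimes_B F \otimes_B M^* \simeq E$, and dually $M^* \otimes_A E \otimes_A M \simeq F$; thus $E \sme F$ by the $\si$-TRO $M$.

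The step I expect to be the main obstacle is the compatibility, in the forward direction, of the Brown--Green--Rieffel isomorphism with the correspondence structure: \cite{BGR77} a priori only produces a $*$-isomorphism of the coefficient algebras $\K_\infty(A) \simeq \K_\infty(B)$, and one must secure it simultaneously with a Hilbert-module unitary carrying $\K_\infty(E)$ onto $\K_\infty(F)$ and intertwining the left actions. Routing the argument through the single bimodule identity $E \simeq M \otimes_B F \otimes_B M^*$ reduces this to the assertion that $\K_\infty(M)$ is the trivial imprimitivity bimodule up to a $*$-isomorphism -- which is precisely the content of \cite{BGR77} -- and the analogous bookkeeping with the corner projections $p$ and $q$ is the only point in the converse needing comparable care. (One uses throughout that $A$ and $B$ are $\si$-unital; in the forward direction this is automatic once $M$ is normalised to an imprimitivity bimodule.)
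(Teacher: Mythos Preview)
Your argument follows a genuinely different route from the paper's, and the forward direction is fine. There the paper simply passes through Corollary~\ref{C: ns cond sme} to obtain $E\dee F$ by a $\sigma$-TRO and then invokes \cite[Corollary 4.7]{EleKak16} as a black box; your direct computation via the identity $\K_\infty(X\otimes_C Y)\simeq\K_\infty(X)\otimes_{\K_\infty(C)}\K_\infty(Y)$ together with the bimodule refinement of \cite{BGR77} is a legitimate, more self-contained alternative. The point you flag as the main obstacle --- that the stable isomorphism from \cite{BGR77} can be chosen to trivialise $\K_\infty(M)$ as an imprimitivity bimodule --- is indeed contained in the linking-algebra proof of \cite{BGR77} (one conjugates by the partial isometry in the multiplier algebra of the stabilised linking algebra).

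For the converse the paper takes a much shorter path that also avoids a gap in yours. It does not build a TRO from corner projections at all: it simply exhibits the column TRO $M=\bC\otimes\C$ (for which $\ncl{M^*M}=\bC$ and $\ncl{MM^*}=\K$, hence $\sigma$ irrespective of $A$) and verifies directly the four relations of Corollary~\ref{C: ns cond sme} for the pair $(E,\K_\infty(E))$; transitivity then finishes. Your TRO $M=p\,\K_\infty(A)\,q$ has $\ncl{MM^*}\simeq A$ and $\ncl{M^*M}\simeq B$, and you assert both are $\sigma$-unital --- but nothing in the hypothesis $\K_\infty(E)\simeq\K_\infty(F)$ forces this (take $E=F=A$ with $A$ not $\sigma$-unital: the stabilisations are trivially isomorphic, yet your $M$ is not a $\sigma$-TRO). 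So either $\sigma$-unitality of $A$ and $B$ is being taken as a standing hypothesis, in which case you should say so, or the argument is incomplete as written. The paper's column-TRO device is precisely what sidesteps this, since it produces a witnessing $\sigma$-TRO whose left and right C*-algebras are $\K$ and $\bC$ rather than $A$ and $B$.
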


\begin{proof}
Suppose that $E \sme F$ by a $\si$-TRO $M$.
Then Corollary \ref{C: ns cond sme} implies that $E \dee F$ by a $\si$-TRO.
Therefore \cite[Corollary 4.7]{EleKak16} applies to give that $\K_\infty(E) \simeq \K_\infty(F)$ as operator spaces.
Since the strong $\Delta$-equivalence between $E$ and $F$ respects the operator bimodule structure we have that the isomorphism $\K_\infty(E) \simeq \K_\infty(F)$ extends to a bimodule isomorphism.

For the converse it suffices to show that $E \sme \K_\infty(E)$ by a $\si$-TRO.
For convenience suppose that ${}_A E_A$ is represented isometrically in a $\B(K)$.
Let $M = \bC \otimes \C$ acting on $K \otimes \bC$, where $\C$ denotes the column operators from $\bC$ to $\ell^2$.
Then we can check that
\[
\K_\infty(E) = \ncl{M E M^*} \qand E = \ncl{M^* \K_\infty(E) M}
\]
and that
\[
\K_\infty(A) = \ncl{M A M^*} \qand A = \ncl{M^* \K_\infty(A) M}.
\]
The proof is then completed by Corollary \ref{C: ns cond sme}.
\end{proof}

%%%%%%%%%%%%%%%%%%%%%%%%%%%%%%%%
\begin{remark}
An alternative proof of Corollary \ref{C: stable} can be given through Proposition \ref{P: si-sme=stable}.
To this end one needs to check that there is a ``canonical'' isomorphism from $\T_{\K_\infty(E)}$ to $\K_\infty(\T(E))$ that fixes the tensor algebras.
This follows by an application of the Gauge-Invariant-Uniqueness-Theorem.
Hence by \cite{KatKri06} it also induces an isomorphism from $\O_{\K_\infty(E)}$ to $\K_\infty(\O_E)$.
The proof then follows by recalling that $\K_\infty(E) \simeq \K_\infty(F)$ as operator bimodules coincides with them being unitarily equivalent, due to Proposition \ref{P: uneq=cc}.
\end{remark}

%%%%%%%%%%%%%%%%%%%%%%%%%%%%%%%%
\subsection{Aperiodic C*-correspondences}
%%%%%%%%%%%%%%%%%%%%%%%%%%%%%%%%

We already discussed that $\dee$ is strictly stronger than $\sme$ of \cite{BMP00}.
As an application of our results we show that there is a large class of algebras where $\dee$ and $\sme$ coincide.
A C*-correspondence ${}_A E_A$ is \emph{aperiodic} in the sense of Muhly-Solel \cite[Definition 5.1]{MuhSol00} if for every $n \in \bZ_+$, for every $\xi \in E^{\otimes n}$ and every hereditary subalgebra $B$ of $A$ we have
\[
\inf\{\nor{\phi_n(a) \xi a} \mid a \geq 0, a \in B, \nor{a} = 1\} = 0.
\]
In their in-depth analysis, Muhly-Solel \cite[Theorem 7.2]{MuhSol00} show that if $E$ and $F$ are non-degenerate, injective and aperiodic then $E \sme F$ is equivalent to $\T_E^+ \sme \T_F^+$.
Injectivity is required only for the forward implication whereas the converse reads through also for non-injective C*-correspondences.
Therefore we obtain the following corollary.

%%%%%%%%%%%%%%%%%%%%%%%%%%%%%%%%
\begin{corollary}\label{C: aperiodic}
Let $E$ and $F$ be non-degenerate and aperiodic.
The following are equivalent:
\begin{enumerate}
\item $E \sme F$ in the sense of Muhly-Solel \cite{MuhSol00};
\item $\T_E^+ \dee \T_F^+$ in the sense of Eleftherakis \cite{Ele14};
\item $\T_E^+ \sme \T_F^+$ in the sense of Blecher-Muhly-Paulsen \cite{BMP00}.
\end{enumerate}
In particular if $A$ and $B$ are $\si$-unital then every item above is equivalent to stable isomorphism of $\T_E^+$ with $\T_F^+$.
\end{corollary}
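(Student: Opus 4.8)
The plan is to chain together the results already established in the paper with the cited theorem of Muhly--Solel. The logical skeleton is: $(\mathrm{i}) \Rightarrow (\mathrm{ii}) \Rightarrow (\mathrm{iii}) \Rightarrow (\mathrm{i})$, so that all three become equivalent.

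First I would prove $(\mathrm{i}) \Rightarrow (\mathrm{ii})$. Assume $E \sme F$; this is witnessed by a TRO $M$ as recalled in Section 4. Then Theorem \ref{T: sme}(iv) applies verbatim and yields $\T_E^+ \dee \T_F^+$ in the sense of Eleftherakis \cite{Ele14}. Note that aperiodicity and injectivity are \emph{not} needed here, since Theorem \ref{T: sme} is unconditional for the forward direction; the hypotheses of the corollary are only there to make the converse work. So this implication is immediate.

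Next, $(\mathrm{ii}) \Rightarrow (\mathrm{iii})$ is also essentially a citation: strong $\Delta$-equivalence is strictly stronger than strong Morita equivalence (as discussed in the Preliminaries, following \cite{Ele14}), and the tensor algebras $\T_E^+$ and $\T_F^+$ are approximately unital (they contain the non-degenerate $C^*$-algebras $A$ and $B$ as recalled earlier), so the implication $\dee \Rightarrow \sme$ on approximately unital operator algebras from \cite{Ele14} gives $\T_E^+ \sme \T_F^+$ in the sense of Blecher-Muhly-Paulsen \cite{BMP00}. This step is where non-degeneracy of $E$ and $F$ is used — it guarantees the approximate units.

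The substantive implication is $(\mathrm{iii}) \Rightarrow (\mathrm{i})$, and here is where I invoke the deep result \cite[Theorem 7.2]{MuhSol00} of Muhly--Solel. Their theorem states that for non-degenerate, injective, aperiodic $E$ and $F$, one has $\T_E^+ \sme \T_F^+$ if and only if $E \sme F$; crucially, as the paper notes just before the corollary, the converse direction of their equivalence (i.e. $\T_E^+ \sme \T_F^+ \Rightarrow E \sme F$) does not require injectivity — only aperiodicity and non-degeneracy. So from $(\mathrm{iii})$ and the standing hypotheses of the corollary we conclude $E \sme F$, closing the cycle. The main obstacle is purely one of bookkeeping: one must make sure to cite the non-injective half of \cite[Theorem 7.2]{MuhSol00} correctly and to observe that none of the three implications in the cycle reintroduces an injectivity hypothesis. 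For the final sentence, if $A$ and $B$ are $\sigma$-unital then the tensor algebras $\T_E^+$ and $\T_F^+$ have countable approximate units, so by \cite[Theorem 3.2]{Ele14} the relation $\T_E^+ \dee \T_F^+$ from $(\mathrm{ii})$ is equivalent to stable isomorphism of $\T_E^+$ with $\T_F^+$; alternatively one can route this through Corollary \ref{C: stable}. This adds the stable-isomorphism clause to the list of equivalences.
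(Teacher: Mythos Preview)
Your proposal is correct and follows exactly the argument the paper intends: the corollary is stated without a separate proof environment, as it is an immediate consequence of the preceding paragraph together with Theorem \ref{T: sme}(iv), namely (i)$\Rightarrow$(ii) by Theorem \ref{T: sme}(iv), (ii)$\Rightarrow$(iii) by \cite{Ele14}, and (iii)$\Rightarrow$(i) by the injectivity-free converse of \cite[Theorem 7.2]{MuhSol00}. Your handling of the $\sigma$-unital clause via \cite[Theorem 3.2]{Ele14} (or alternatively Corollary \ref{C: stable}) is also what the paper has in mind.
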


%%%%%%%%%%%%%%%%%%%%%%%%%%%%%%%%%%%%%%%%%%%
\begin{acknow}
The authors acknowledge support from the London Ma\-thematical Society.
George Eleftherakis visited Newcastle University under a Scheme 4 LMS grant (Ref: 41449).
Elias Katsoulis visited Newcastle University under a Scheme 4 LMS grant (Ref: 41424).
\end{acknow}

%%%%%%%%%%%%%%%%%%%%%%%%%%%%%%%%

\end{document}